\newcounter{stepnum}
\newcommand{\step}{%
	\par
	\refstepcounter{stepnum}%
	\textbf{Step \arabic{stepnum}}.\enspace\ignorespaces
}
\newtheorem{definition}{Definition}[section]
\newtheorem{theorem}[definition]{Theorem}
\newtheorem{lemma}[definition]{Lemma}
\newtheorem{proposition}[definition]{Proposition}
\newtheorem{corollary}[definition]{Corollary}
\theoremstyle{remark}
\newtheorem{remark}[definition]{Remark}
\numberwithin{equation}{section}
\title{Global Calder\'{o}n-Zygmund estimates for asymptotically convex fully nonlinear Grad-Mercier type equations}
\author[a]{Yao Zhang}
\author[a]{Xiaofeng Jin}
\author[a]{Lingwei Ma}
\author[a]{Zhenqiu Zhang\thanks{Corresponding author.}}
\affil[a]{School of Mathematical Sciences and LPMC, Nankai University, Tianjin, 300071, P.~R.~China}
\date{\today}
\begin{document}
\maketitle
\footnotetext[1]{E-mail: 2120220099@mail.nankai.edu.cn (Y. Zhang), 1120220040@mail.nankai.edu.cn (X. Jin), malingwei@nankai.edu.cn (L. Ma), zqzhang@nankai.edu.cn (Z. Zhang).}
\begin{abstract}
In this paper, we consider the following Dirichlet problem for the fully nonlinear elliptic equation of Grad-Mercier type under asymptotic convexity conditions
\begin{equation*}
	\left\{
	\begin{array}{ll}
		F(D^2u(x),Du(x),u(x),x)=g(|\{y\in \Omega:u(y)\ge u(x)\}|)+f(x) & \text{in } \Omega, \\
		u=\psi &\text{on } \partial \Omega.
	\end{array}
	\right.
\end{equation*}
In order to overcome the non-convexity of the operator $F$ and the nonlocality of the nonhomogeneous term $g$, we apply the compactness methods and frozen technique to prove the existence of the $W^{2,p}$-viscosity solutions and the global $W^{2,p}$ estimate. As an application, we derive a Cordes-Nirenberg type continuous estimate up to boundary. Furthermore, we establish a global BMO estimate for the second derivatives of solutions by using  an asymptotic approach, thereby refining the borderline case of Calder\'{o}n-Zygmund estimates.

Mathematics Subject classification (2020):35B45; 35R05; 35B65.

Keywords: Global Calder\'{o}n-Zygmund estimates;  fully nonlinear elliptic equations; Grad-Mercier type;
asymptotically convex; viscosity solution \\

\end{abstract}


\section{Introduction and main results}\label{section1}

In this paper, we are concerned with existence and the global Calder\'{o}n-Zygmund regularity for $W^{2,p}$-viscosity solutions of the following fully nonlinear Dirichlet problem
\begin{equation}\label{eqn}
	\left\{
	\begin{array}{ll}
		F(D^2u(x),Du(x),u(x),x)=g(|\{y\in \Omega:u(y)\ge u(x)\}|)+f(x) &\text{in } \Omega, \\
		u=\psi &\text{on } \partial \Omega,
	\end{array}
	\right.
\end{equation}
where $ \Omega\subseteq \mathbb{R}^d$ is a bounded domain with $C^{1,1}$ boundary, and throughout this paper $g:[0,|\Omega|]\to \mathbb{R}$ is a continuous function. The fully nonlinear elliptic operator $F:\mathcal{S}(d)\times \mathbb{R} ^{d} \times\mathbb{R}\times\Omega \to \mathbb{R}$  is uniformly elliptic with ellipticity constants $0<\lambda\le\Lambda$. We assume that F satisfies  some structure conditions, i.e., there exist universal constants $\gamma\geq 0$ and $\omega\geq 0$ such that
\begin{equation}\label{F}
	\mathcal{M}^{-}(M-N)-\gamma|p-q|-\omega|r-s|\leq F(M, p, r, x)-F(N, q, s, x)\leq \mathcal{M}^{+}(M-N)+\gamma|p-q|+\omega|r-s|.\tag{SC}
\end{equation}
Here $\mathcal{S}(d)$ denotes the set of $d\times d$ real symmetric matrices, while $\mathcal{M}^{+}$ and $\mathcal{M}^{-}$ represent Pucci's extremal operators defined by
\begin{equation*}
	\begin{aligned}
		\mathcal{M}^{-}(M, \lambda, \Lambda)&=\mathcal{M}^{-}(M)=\lambda \sum_{e_{i}>0} e_{i}+\Lambda \sum_{e_{i}<0} e_{i}, \\
		\mathcal{M}^{+}(M, \lambda, \Lambda)&=\mathcal{M}^{+}(M)=\Lambda \sum_{e_{i}>0} e_{i}+\lambda \sum_{e_{i}<0} e_{i},
	\end{aligned}
\end{equation*}
where  $\{e_{i}\}_{i\ge 1}$  are the eigenvalues of  $M$. Some basic properties of Pucci's extremal operators are listed in \cite{caffarelli1995fully}.

The Grad-Mercier type equations  of the form \eqref{eqn}, introduced by Grad \cite{grad1,grad2,grad3}  and Mercier \cite{mercier}, simplify the original fully nonlinear operator by replacing it with a Laplace operator. These equations are also referred to as queer differential equations (QDEs).	
This type of equation is commonly used to characterize many physical problems in the field of adiabatic compression and resistive diffusion of plasma in toroidal or spherical containers. Recently, there has been extensive research on the qualitative properties of solutions to equations about plasma physics, which can be referenced in \cite{1978,1979,1981,1982}.

The celebrated contributions about the qualitative properties of solutions were established by Caffarelli \cite{1989} (see also \cite{1996}) for the following fully nonlinear elliptic equations
\begin{equation}\label{fully}
	F(D^2u,x)=f(x)\quad\text{in }\Omega,
\end{equation}
where $F$ is uniformly elliptic and $f\in L^{p}(\Omega)$ with $p>d$. He established the interior  $W^{2,p}$ estimates for $L^p$-viscosity solutions where he assumed the homogeneous equation driven by the frozen coefficients counterpart of the operator to have $C^{1,1}$ estimates. Furthermore, under the same assumptions as the main result in \cite{1989}, the interior $W^{2,p}$ regularity have been generalized by Escauriaza \cite{escauriaza1993w} for the solution of \eqref{fully} whose integral exponent of the source term allows that $p > d-\epsilon$ for some $\epsilon > 0$ depending on the ellipticity constants and the spatial dimension. At the same time, many authors were inspired to research fully nonlinear equations with low order terms
\begin{equation}\label{fully2}
	F(D^2u,Du,u,x)=f(x)\quad\text{in }\Omega,
\end{equation}
where $F$ satisfies structure condition \eqref{F} and $f\in L^{p}(\Omega)$ with $p>d$.
Under the assumption of the smallness of oscillation measure in $L^p$ spaces, \'{S}wiech \cite{du1} obtained the existence and interior $W^{2,p}$ estimates for $L^p$-viscosity solution  to \eqref{fully2}, where $p>p_0=p_0(d,diam(\Omega),\lambda,\Lambda,\gamma)$. Winter \cite{winter} extended the results of Caffarelli and \'{S}wiech. He proved $W^{2,p}$ and $W^{1,p}$ estimates at a flat  boundary. Besides, he also established the existence and uniqueness of solutions for the equation \eqref{fully2} with Dirichlet boundary condition.
For concave or convex fully nonlinear elliptic equations, there have been a quantity of literature on  Sobolev regularity of $W^{2,p}$-viscosity solutions. We may refer to \cite{evans1,caffarelli1995fully,armstrong,2003,krylov1983}.

The challenging question for fully nonlinear elliptic equations with nonconvex structures is whether Sobolev estimates are valid. Without considering the convexity of the operator and under the condition $p\le d$, \'{S}wiech \cite{du1} obtained that $L^p$ viscosity solutions of equation \eqref{fully2} satisfied $W^{1,q}$ regularity, where
\begin{equation*}
	q<p^{\ast}=\frac{dp}{d-p}.
\end{equation*}
In \cite{krylov1,krylov2}, Krylov established the existence of $W^{2,p}$-viscosity solutions of the Dirichlet problem for fully nonlinear elliptic equations under relaxed convexity and no convexity assumptions, respectively. Furthermore , Pimentel and  Teixeira \cite{Teixeira2016} introduced the following concept of asymptotic convexity. More precisely, for  $\mu>0$, they set	$F_{\mu}(M)=\mu F(\mu^{-1} M),$ and the recession function $F^{\ast}$ associated with $F$  is  defined by
\begin{equation*}
	F^{\ast}(M):=\lim _{\mu \rightarrow 0} F_{\mu}(M).
\end{equation*}
Under the assumption that $F^{\ast}$ is convex, they studied the interior $W^{2,p}$ estimates for solutions of
the following fully nonlinear elliptic equation
\begin{equation}\label{fully3}
	F(D^{2}u)=f \quad\text{in } \Omega
\end{equation}
with $f\in L^{p}(\Omega), p>d$.

Inspired by the definition of asymptotic convexity for operators $F$ without lower-order terms, some researchers have defined the concept of asymptotic convexity for operators that include lower-order terms.
The authors derived the interior $W^{2,p}$ regularity of solutions for \eqref{fully2} while the assumption of the growth rate of the gradient of the solution in the original structural condition is relaxed by an $L^p$ integrable function in \cite{haozhang} under the condition that operators $F$ with lower-order terms have asymptotic convexity.
It is worth pointing out that some global $W^{2,p}$ regularity results to the equation \eqref{fully2} with the boundary value function $\psi\in W^{2,p}(\Omega)$ were proved by da Silva and Ricarte \cite{bmo} while they relaxed the convexity of $F^{\ast}$ to the extent that the solution of the homogeneous equation corresponding to $F^{\ast}$ satisfies the $C^{1,1}$ \textit{a priori} estimate (see also Definition \ref{F^ast}).

Recently, Caffarelli and Tomasetti \cite{plasma} considered the convex fully nonlinear elliptic operator $F$ and have studied the existence and global $W^{2,p}$ estimates for $W^{2,p}$-viscosity solutions to the following Grad-Mercier type Dirichlet problem,
\begin{equation}\label{fully4}
	\left\{
	\begin{array}{ll}
		F(D^2u(x))=g(|\{y\in \Omega:u(y)\ge u(x)\}|) & \text{in } \Omega, \\
		u=\psi &\text{on } \partial \Omega,
	\end{array}
	\right.
\end{equation}
where $\psi\in W^{2,p}(\Omega)$ with $p>d$ and $g:[0,|\Omega|]\to \mathbb{R}$ is a continuous function. If the operator $F$ lacks convexity, the study of the Dirichlet problem derived from the fully nonlinear Grad-Mercier type equation remains insufficient. The core challenges of this problem stem from the geometric degeneracy caused by the non-convexity of the operator and the non-local effects introduced by the inhomogeneous term on the right-hand side of the equation.

To address these issues, this paper aims to investigate  the Dirichlet boundary value problem for the fully nonlinear Grad-Mercier type equation with lower-order terms under asymptotic convexity conditions. After resolving the above challenges, we prove the existence of $W^{2,p}$-viscosity solutions and the global Calder\'{o}n-Zygmund estimates for the Dirichlet boundary value problem of  the  asymptotically convex fully nonlinear Grad-Mercier type equations. Before presenting  the main results of this paper, we need some notations.
At first,  the recession operator is rigorously defined as follows.
\begin{definition}
	For $\mu>0$, we set
	\begin{equation*}
		F_{\mu}(M,0,0,x)=\mu F(\mu^{-1} M,0,0,x).
	\end{equation*}
	The recession function associated with $F$  is  defined by
	\begin{equation*}
		F^{*}(M,0,0,x):=\lim _{\mu \rightarrow 0} F_{\mu}(M,0,0,x) .
	\end{equation*}
\end{definition}
Next we define the smallness condition on the oscillation measure for $F^{\ast}$.

\begin{definition}
	For $x_{0} \in \Omega$, the oscillation measure of $F^{\ast}$ around  $x_{0}$ defined as
	\begin{equation*}
		\beta_{F^{*}}\left(x, x_{0}\right):=\sup _{X \in \mathcal{S}(d)} \frac{\left|F^{*}(X, 0,0, x)-F^{*}\left(X, 0,0, x_{0}\right)\right|}{\|X\|+1}.
	\end{equation*}
	We say $F^{\ast}$ satisfies the smallness condition on the oscillation measure in $L^p$ if there exist universal constants  $ \alpha_0 \in(0,1)$, $\theta_{0}>0$  and  $0<r_{0} \ll 1 $ such that for $ 0<r \ll r_{0}$ and $p>d$,
	\begin{equation}\label{beta}
		\left(\fint_{B_{r}\left(x_{0}\right) \cap \Omega}|\beta_{F^{*}}\left(x, x_{0}\right)|^{p} d x\right)^{\frac{1}{p}} \leq \theta_{0} r^{\alpha_0},
	\end{equation}
	for every $x_0\in\Omega$.
\end{definition}

Now we are in a position to present the main results of this paper.  We first  state the existence result  and  the global $W^{2,p}$ regularity of $W^{2,p}$-viscosity solutions to Dirichlet problem \eqref{eqn}.
\begin{theorem}\label{main}
	Let  $F$  satisfy \eqref{F}. Suppose that the recession operator $F^\ast$ is convex and satisfies the smallness condition on the oscillation measure in $ L^p$ with $p>d$ . If $\psi\in W^{2,p}(\Omega)$ and $f\in L^{p}(\Omega)$, then there exists a $W^{2,p}$-viscosity solution $u$ to \eqref{eqn}.
	Moreover, $u \in W^{2, p}\left(\Omega\right)$  and there exists universal constant $C=C(d,p,\lambda,\Lambda,\theta_0,\alpha_0,\gamma,diam(\Omega),\|\partial\Omega\|_{C^{1,1}})>0$  such that
	\begin{equation*}
		\|u\|_{W^{2, p}\left(\Omega\right)} \leq C\left(\|u\|_{L^{\infty}\left(\Omega\right)}+\|\psi\|_{W^{2,p}\left(\Omega\right)}+\|g(|u\ge u(x)|)\|_{L^p(\Omega)}+\|f\|_{L^p(\Omega)}\right).
	\end{equation*}
\end{theorem}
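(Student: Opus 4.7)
The plan is to decouple the two difficulties of the problem — the non-convexity of $F$ and the nonlocality of the right-hand side — by a \emph{freezing} procedure, and then to recover existence through a Schauder fixed-point argument based on an a priori global $W^{2,p}$ estimate for the frozen problem.

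\textbf{Step 1 (Frozen problem and a priori estimate).} For each $v\in C(\overline{\Omega})$, define
\begin{equation*}
h_v(x):=g\bigl(|\{y\in\Omega:v(y)\ge v(x)\}|\bigr)+f(x),
\end{equation*}
which, since $g$ is continuous on $[0,|\Omega|]$ and hence bounded, satisfies $\|h_v\|_{L^p(\Omega)}\le \|g\|_{L^\infty}|\Omega|^{1/p}+\|f\|_{L^p(\Omega)}$ uniformly in $v$. For this now \emph{local} Dirichlet problem $F(D^2u,Du,u,x)=h_v$ in $\Omega$, $u=\psi$ on $\partial\Omega$, one produces a unique $W^{2,p}$-viscosity solution $u=T(v)$ together with a global $W^{2,p}$ estimate via the asymptotic-convexity program: compare $u$ on small balls with the solution of the frozen recession problem $F^\ast(D^2w,0,0,x_0)=0$, which enjoys a $C^{1,1}$ a priori estimate thanks to the convexity of $F^\ast$; the smallness \eqref{beta} of the oscillation $\beta_{F^\ast}$ together with \'Swiech--Winter type arguments up to a $C^{1,1}$ boundary then yields second-derivative decay that feeds a Calder\'on--Zygmund covering argument. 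One thereby obtains, with constant independent of $v$,
\begin{equation*}
\|T(v)\|_{W^{2,p}(\Omega)}\le C\bigl(\|T(v)\|_{L^\infty(\Omega)}+\|\psi\|_{W^{2,p}(\Omega)}+\|h_v\|_{L^p(\Omega)}\bigr).
\end{equation*}

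\textbf{Step 2 (Fixed-point argument).} Let $K\subset C(\overline{\Omega})$ be the closed convex set of functions bounded in $L^\infty$ by the ABP/Alexandrov bound coming from Step 1 applied to the frozen right-hand side; $T:K\to K$ is well-defined. Because $p>d$, the a priori estimate above embeds $T(K)$ compactly into $C(\overline{\Omega})$ via $C^{1,\alpha}(\overline{\Omega})$. For continuity of $T$, if $v_k\to v$ uniformly then $|\{v_k\ge v_k(x)\}|\to |\{v\ge v(x)\}|$ for every $x$ outside the plateau set $\{v=v(x)\}$ having positive measure, and by continuity of $g$ together with dominated convergence $h_{v_k}\to h_v$ in $L^p(\Omega)$; the stability of $L^p$-viscosity solutions under $L^p$-convergence of the right-hand side (and uniform $W^{2,p}$ bounds) then gives $T(v_k)\to T(v)$ in $C(\overline{\Omega})$. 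Schauder's fixed-point theorem supplies $u\in K$ with $u=T(u)$, which is the sought solution, and the estimate of Step 1 passed to this fixed point is exactly the asserted global bound.

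\textbf{Principal obstacle.} The crux is Step~1: establishing the global $W^{2,p}$ estimate for the frozen, asymptotically convex, lower-order-term problem up to a merely $C^{1,1}$ boundary, where the recession operator's $C^{1,1}$ estimate must be combined with the smallness condition \eqref{beta}, a flattening of $\partial\Omega$, and a boundary Calder\'on--Zygmund covering. A secondary delicate point is verifying continuity of $v\mapsto h_v$ in $L^p$ despite the potential discontinuity of the distribution function of $v$ on its plateaus; here one exploits that the uniform $W^{2,p}$ bound forces the limiting $v$ to be $C^{1,\alpha}$, so its plateau sets are negligible, and dominated convergence can be invoked cleanly.
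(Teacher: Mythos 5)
Your overall architecture (freeze the nonlocal right-hand side, prove a $v$-independent global $W^{2,p}$ bound via the recession operator, close with a fixed point) is the right one, and your Step 1 matches the paper's use of the da Silva--Ricarte a priori estimate together with an existence statement for the frozen problem. The gap is in Step 2, in the continuity of $v\mapsto h_v$. If $v_k\to v$ uniformly with $\Delta_k:=\|v_k-v\|_{L^\infty(\Omega)}$, then
\begin{equation*}
|\{v\ge v(x)+2\Delta_k\}|\;\le\;|\{v_k\ge v_k(x)\}|\;\le\;|\{v\ge v(x)-2\Delta_k\}|,
\end{equation*}
and the two bounds converge to $|\{v>v(x)\}|$ and $|\{v\ge v(x)\}|$ respectively, so convergence at $x$ holds only if $|\{v=v(x)\}|=0$. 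Your proposed fix --- that the limit $v$ is $C^{1,\alpha}$ and hence its plateau sets are negligible --- is false: a $C^\infty$ function can be constant on a ball $B$, and then $|\{v=v(x)\}|\ge|B|>0$ for \emph{every} $x\in B$, so the failure occurs on a set of positive measure and $h_{v_k}\not\to h_v$ in $L^p$ (take $v\equiv0$ and $v_k=\epsilon_k\phi$ with $\phi$ sign-changing to see that $T$ is genuinely discontinuous at such a $v$). Moreover, continuity is needed at every $v$ in the domain of the fixed-point map, not merely at the eventual fixed point.

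This is precisely why the paper does not freeze $g(|\{v\ge v(x)\}|)$ directly but first solves a $\delta$-regularized problem with right-hand side $G^\delta_v(x)=g\bigl(\frac1\delta\int_0^\delta|\{v\ge v(x)-h\}|\,dh\bigr)$: since $t\mapsto|\{v\ge t\}|$ is monotone, the set of levels $t$ with $|\{v=t\}|>0$ is at most countable, hence null in $h$, so $|\{v_k\ge v_k(x)-h\}|\to|\{v\ge v(x)-h\}|$ for a.e.\ $h\in[0,\delta]$ and every $x$, and the $h$-average restores continuity of the frozen map; the fixed-point theorem (the paper uses Leray--Schauder) then yields a solution $u_\delta$ for each $\delta>0$. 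One finally lets $\delta\to0$ using the one-sided bounds $|\{u\ge u(x)\}|\le\frac1\delta\int_0^\delta|\{u\ge u(x)-h\}|\,dh\le|\{u\ge u(x)\}|+|\{u(x)>u\ge u(x)-\delta\}|$, whose last term tends to $0$ for every $x$ with no non-degeneracy assumption on level sets, together with the stability of $W^{2,p}$-viscosity solutions. You need to insert this intermediate regularization (or an equivalent device) for your Step 2 to go through; the rest of your outline then coincides with the paper's argument.
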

As a consequence of Theorem \ref{main}, it is obvious to derive the H\"{o}lder regularity for gradient of solution by applying the Sobolev embedding theorem. Meanwhile, the global Cordes-Nirenberg type estimate could be proved.
\begin{corollary}\label{c1}
	Under the same assumptions of Theorem \ref{main}, we obtain that
	$u\in C^{1,\alpha}(\bar{\Omega})$ for any $0<\alpha<1$. Futhermore,
	there exists universal constant $C=C(d,p,\lambda,\Lambda,\theta_0,\alpha_0,\gamma,diam(\Omega),$ $\|\partial\Omega\|_{C^{1,1}})>0$  so that
	\begin{equation*}
		\|u\|_{C^{1, \alpha}\left(\bar{\Omega}\right)} \leq C\left(\|u\|_{L^{\infty}\left(\Omega\right)}+\|\psi\|_{W^{2,p}\left(\Omega\right)}+\|g(|u\ge u(x)|)\|_{L^p(\Omega)}+\|f\|_{L^p(\Omega)}\right).
	\end{equation*}
\end{corollary}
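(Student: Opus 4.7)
The plan is to obtain Corollary~\ref{c1} as a direct consequence of Theorem~\ref{main} combined with the Sobolev embedding theorem. First, Theorem~\ref{main} furnishes a $W^{2,p}$-viscosity solution $u$ satisfying the global estimate
\begin{equation*}
\|u\|_{W^{2,p}(\Omega)} \le C\bigl(\|u\|_{L^{\infty}(\Omega)}+\|\psi\|_{W^{2,p}(\Omega)}+\|g(|u\ge u(x)|)\|_{L^p(\Omega)}+\|f\|_{L^p(\Omega)}\bigr).
\end{equation*}
Since $\partial\Omega$ is $C^{1,1}$ and, in particular, Lipschitz, the classical Sobolev embedding
\begin{equation*}
W^{2,p}(\Omega)\hookrightarrow C^{1,\alpha}(\bar\Omega), \qquad \alpha = 1-\tfrac{d}{p},
\end{equation*}
is available for every $p>d$, with embedding constant depending only on $d$, $p$ and $\Omega$. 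Composing this embedding with the $W^{2,p}$ bound above yields precisely the Cordes-Nirenberg type inequality asserted in the corollary.

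The assertion that $u\in C^{1,\alpha}(\bar\Omega)$ for \emph{any} $\alpha\in(0,1)$ is then understood in the sense that the H\"older exponent $\alpha=1-d/p$ can be made arbitrarily close to $1$ by taking $p>d$ sufficiently large; the constant $C$ in the final estimate depends on $\alpha$ through the corresponding choice of $p$, and the dependence on $p$ is inherited from both Theorem~\ref{main} and the embedding constant. I do not anticipate any substantive obstacle in this argument, as the deep work has already been carried out in Theorem~\ref{main}; the only subtlety worth flagging is that the embedding up to the closure $\bar\Omega$ requires $\partial\Omega$ to be at least Lipschitz, a condition automatically granted by the $C^{1,1}$ standing hypothesis so that the usual extension operator applies.
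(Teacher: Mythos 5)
Your proposal is correct and follows exactly the route the paper intends: the paper offers no separate proof of Corollary~\ref{c1} beyond the remark that it follows from Theorem~\ref{main} via the Sobolev embedding $W^{2,p}(\Omega)\hookrightarrow C^{1,\alpha}(\bar\Omega)$ for $p>d$, which is precisely your argument. Your caveat that a fixed $p>d$ only yields $\alpha=1-d/p$, so that ``any $\alpha\in(0,1)$'' must be read as requiring correspondingly large $p$, is a fair and accurate reading of an imprecision in the paper's statement rather than a gap in your proof.
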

\begin{remark}
	There exist many H\"{o}lder continuity results for fully nonlinear elliptic equations. Caffarelli \cite{1989,caffarelli1995fully} obtained the
	interior $C^{1,\alpha}$ estimates to the equation \eqref{fully}. Considering Grad-Mercier type equation, Caffarelli and Tomasetti \cite{plasma} proved the $C^{1,\alpha}$ regularity for \eqref{fully4} in the case that $F$ has convexity. Compared with \cite{plasma}, we obtain $C^{1,\alpha}$ estimates from Corollary \ref{c1} for the Grad-Mercier type Dirichlet problem \eqref{eqn} under the assumption that recession operator $F^{\ast}$ has convexity.
\end{remark}
Finally, we turn to the borderline situation. We introduce the definition of $p\operatorname{-BMO}$ space.
\begin{definition}
	\begin{enumerate}[(i)]
		\item 	Assume $u\in p\operatorname{-BMO}(\Omega)$ for $p\ge 1$ if $u\in L^p_{loc}(\Omega)$ and for $x_0\in \Omega$, $\rho>0$, define the $p\operatorname{-BMO}$ seminorm
		\begin{equation*}
			[u]_{p-BMO(\Omega)}:=\sup_{B_\rho\subset\Omega} \left(\fint_{B}|u(x)-(u)_{x_0,\rho}|^{p} d x\right)^{\frac{1}{p}} < \infty,
		\end{equation*}
		where $(u)_{x_0,\rho}=\fint_{B_\rho(x_0)\cap\Omega}u(x)dx$. When $p=1$ one says simply that $u\in \operatorname{BMO}(\Omega)$.
		\item Furthermore, define the $p\operatorname{-BMO}$ norm as follows
		\begin{equation*}
			\left \| u \right \|_{p\operatorname{-BMO}(\Omega)}:=[u]_{p\operatorname{-BMO}(\Omega)}+\left \| u \right \|_{L^p(\Omega)}.
		\end{equation*}
	\end{enumerate}
\end{definition}
\begin{remark}
	As a consequence of John-Nirenberg inequality, there is an obvious fact that the seminorms of $p\operatorname{-BMO}$ space
	and $\operatorname{BMO}$ space are equivalent for all $p\in[1,+\infty)$.
\end{remark}
When $f\in p\operatorname{-BMO}(\Omega)$, we establish that $W^{2,p}$-viscosity solutions admit $\operatorname{BMO}$ type estimates for their second derivatives. The complete Calder\'{o}n-Zygmund estimates for $W^{2,p}$-viscosity solutions to \eqref{eqn} are derived.
\begin{theorem}\label{pbmo}
	Let  $F$  satisfy \eqref{F}. Suppose that the recession operator $F^\ast$ is convex and satisfies the smallness condition on the oscillation measure in $ L^p$ with $p>d$ . If $D^2\psi\in p\operatorname{-BMO}(\Omega)$ and $f\in p\operatorname{-BMO}(\Omega)$, then there exists a $W^{2,p}$-viscosity solution $u$ to problem \eqref{eqn} such that
	\begin{equation*}
		\|D^2u\|_{p\operatorname{-BMO}(\Omega)}\le C(\left \| u \right \|_{L^\infty(\Omega)}+\|D^2\psi\|_{p\operatorname{-BMO}(\Omega)}+\|g(|u\ge u(x)|)\|_{p\operatorname{-BMO}(\Omega)}+\|f\|_{p\operatorname{-BMO}(\Omega)}),
	\end{equation*}
	where $C=C(d,p,\lambda,\Lambda,\theta_0,\alpha_0,\gamma,\omega,diam(\Omega),\|\partial\Omega\|_{C^{1,1}})>0$.
\end{theorem}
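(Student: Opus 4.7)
\textbf{Proof plan for Theorem~\ref{pbmo}.} Existence of a $W^{2,p}$-viscosity solution $u$ is immediate from Theorem~\ref{main}, since $\Omega$ is bounded and therefore $p\operatorname{-BMO}(\Omega)\subset L^{p}(\Omega)$, so both $f$ and $D^{2}\psi$ lie in $L^{p}(\Omega)$ and $\psi\in W^{2,p}(\Omega)$. What remains is to promote the global $W^{2,p}$ bound of Theorem~\ref{main} into a $p\operatorname{-BMO}$ bound on $D^{2}u$. By the Campanato-type characterization of $p\operatorname{-BMO}$, it suffices to exhibit, for every ball $B_{\rho}(x_{0})\cap\Omega$ with $0<\rho\le r_{0}$, a symmetric matrix $M_{x_{0},\rho}$ so that
\begin{equation*}
\biggl(\fint_{B_{\rho}(x_{0})\cap\Omega}\lvert D^{2}u-M_{x_{0},\rho}\rvert^{p}\,dx\biggr)^{1/p}\le C\,\Theta,
\end{equation*}
where $\Theta$ collects the four data norms $\|u\|_{L^{\infty}}+\|D^{2}\psi\|_{p\operatorname{-BMO}}+\|g(|u\ge u(\cdot)|)\|_{p\operatorname{-BMO}}+\|f\|_{p\operatorname{-BMO}}$.

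The central device is to freeze both the coefficients and the non-local forcing on each ball. Fix $x_{0}\in\Omega$ and a small $\rho$, and let $h$ solve the frozen, constant-source, recession Dirichlet problem
\begin{equation*}
F^{\ast}(D^{2}h,0,0,x_{0})=\bigl(f+g(|\{u\ge u(\cdot)\}|)\bigr)_{B_{\rho}(x_{0})}\text{ in }B_{\rho}(x_{0}),\qquad h=u\text{ on }\partial B_{\rho}(x_{0}).
\end{equation*}
Since $F^{\ast}$ is convex with constant coefficients, $h$ satisfies the $C^{1,1}$ a priori bound; this is exactly what controls the mean oscillation of $D^{2}h$ on smaller concentric balls. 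Taking $M_{x_{0},\rho}:=(D^{2}h)_{B_{\rho}(x_{0})}$, the whole task reduces to estimating $w:=u-h$. Writing the equation for $w$ in Pucci form via \eqref{F}, its right-hand side splits into four contributions that I will treat separately: the coefficient oscillation of $F^{\ast}$, measured by $\beta_{F^{\ast}}(\cdot,x_{0})$ and absorbed by \eqref{beta}; the recession error $\lvert F-F^{\ast}\rvert$, controlled by the $\mu$-rescaling in the definition of $F^{\ast}$ together with the uniform bound on $D^{2}h$; the lower-order terms $\gamma\lvert Du\rvert+\omega\lvert u\rvert$, absorbed by the $W^{2,p}$ bound of Theorem~\ref{main} and Sobolev embedding; and the oscillation of $f+g(|\{u\ge u(\cdot)\}|)$ about its mean on $B_{\rho}(x_{0})$, handled by the two $p\operatorname{-BMO}$ hypotheses.

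Applying Theorem~\ref{main} to $w$ on each scale and summing through a dyadic iteration in $\rho$, in the spirit of the asymptotic Calder\'{o}n--Zygmund scheme of \cite{bmo} combined with the Grad--Mercier machinery of \cite{plasma}, one obtains the required uniform bound on the Campanato mean oscillation. Near $\partial\Omega$, the $C^{1,1}$ flattening of the boundary and the boundary $C^{1,1}$ estimate for the convex, constant-coefficient operator $F^{\ast}(\cdot,0,0,x_{0})$ — applied to a comparison function whose boundary trace is built from $\psi$ (with $D^{2}\psi\in p\operatorname{-BMO}$) — allow the same decomposition to run up to $\partial\Omega$.

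The main obstacle is the simultaneous management, inside the $w$-estimate, of (i) the non-local term $g(|\{u\ge u(\cdot)\}|)$, whose mean oscillation on a ball is not determined locally by the datum but couples back to $u$ itself and must therefore be treated as an a priori given forcing through its $p\operatorname{-BMO}$ norm rather than via a direct oscillation estimate; and (ii) the merely asymptotic, rather than exact, convexity of $F$, which forces the convexity to be carried by the recession operator while the discrepancy $\lvert F-F^{\ast}\rvert$ is driven to zero along the $\mu$-scaling at every dyadic scale. The uniform $L^{\infty}$ control of $u$ from Theorem~\ref{main}, together with the smallness of $\theta_{0}$ in \eqref{beta}, is what ultimately makes the iteration uniform across scales and up to the boundary.
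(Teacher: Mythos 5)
Your top-level architecture is defensible and in fact somewhat more direct than the paper's: since Theorem~\ref{main} already gives existence, and $g(|\{u\ge u(\cdot)\}|)$ is bounded (hence in $p\operatorname{-BMO}(\Omega)$), one may regard $u$ as solving the non-Grad--Mercier problem \eqref{3.1} with forcing $\tilde f=f+g(|\{u\ge u(\cdot)\}|)\in p\operatorname{-BMO}(\Omega)$ and then invoke an a priori $p\operatorname{-BMO}$ estimate for that problem. The paper instead proves that a priori estimate separately (Proposition~\ref{pe}, Corollary~\ref{cor}), applies it to the $\delta$-regularized frozen problems \eqref{ueqn}, and passes to the limit $\delta\to0$ as in Theorem~\ref{main}. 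Your decision to treat the non-local term purely through its $p\operatorname{-BMO}$ norm is consistent with the form of the stated estimate and is not where the difficulty lies.

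The genuine gap is in the heart of your argument, the Campanato iteration, and specifically in how you control the recession error. You claim $|F-F^{\ast}|$ is ``controlled by the $\mu$-rescaling in the definition of $F^{\ast}$ together with the uniform bound on $D^{2}h$.'' This misidentifies the mechanism: $F_{\mu}(M)=\mu F(\mu^{-1}M)\to F^{\ast}(M)$ means $F$ is close to $F^{\ast}$ only in the regime of \emph{large} Hessians, so a uniform ($O(1)$) bound on $D^{2}h$ is precisely the regime in which $F(D^{2}h,\dots)$ and $F^{\ast}(D^{2}h,\dots)$ may differ by a quantity that is $O(1)$ and does not tend to zero. Comparing $u$ directly with the $F^{\ast}$-solution $h$ at the original scale therefore does not produce the smallness needed to start or propagate the iteration. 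The correct implementation (as in Proposition~\ref{pe}) requires rescaling the \emph{solution}: one passes to $w_{\mu}=\mu w$ solving $G_{\mu}(D^{2}w_{\mu},x)=\mu g$, normalizes to $v$ as in \eqref{v} so that the smallness condition \eqref{ep} holds, and then invokes an approximation lemma (the paper uses \cite[Lemma 4.3]{bmo} to get $\|v_{j}-h\|_{L^{\infty}}\le\delta$ and \cite[Corollary 4.5]{bmo} for the paraboloid step \eqref{a1}--\eqref{a3}). Without this normalization your dyadic scheme does not close. Relatedly, ``applying Theorem~\ref{main} to $w$ on each scale'' is not a meaningful step: Theorem~\ref{main} is a global existence/$W^{2,p}$ statement for the Grad--Mercier problem, not a smallness or compactness estimate for the difference $u-h$ on a ball; what is needed there is precisely the approximation lemma above. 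Finally, note that the constant in the target estimate depends on $\omega$, reflecting the reduction $\tilde F(D^{2}u,x)=\tilde f$ with $\tilde f$ absorbing $\gamma|Du|+\omega|u|$ into the $p\operatorname{-BMO}$ right-hand side via \eqref{u2,p}--\eqref{ftilde}; your plan mentions this absorption but should perform it \emph{before} the iteration, as the paper does, rather than inside the per-scale estimate.
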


The remainder of this paper is organized as follows. Section 2 contains some notions and preliminary conclusions. In section 3, we establish the existence of solutions and prove the $W^{2,p}$ estimates for Dirichlet problem  associated with  the fully nonlinear elliptic equation of Grad-Mercier type.  Finally, section 4 is devoted to the proof of BMO type estimates for $W^{2,p}$-viscosity solutions to \eqref{eqn}.
\section{Some Preliminaries}\label{section2}
In this section, we introduce some definitions and classical results which will be used in this article. First, we present the definition of $W^{2,p}$-viscosity solutions.

\begin{definition}
	Let $F:\mathcal{S}(d)\times\mathbb{R}^{d}\times\mathbb{R}\times\Omega \backslash \mathcal{N}\rightarrow \mathbb{R}$ be a measured and uniformly elliptic operator where $\mathcal{N}\subset\Omega$ be a null set,
	with respect to the Lebesgue measure. Suppose $F$ does not increase with respect to $M$ and let $f\in L^{p}(\Omega)$, $p>d/2$.
	\begin{enumerate}[(i)]
		\item A function  $u$ is upper semi-continuousis in $\Omega$ and is a $W^{2,p}$-viscosity subsolution to
		\begin{equation}\label{2.1}
			F\left(D^{2} u,D u,u,x\right)=f(x)\quad in \quad\Omega,
		\end{equation}
		if $u\le\psi$ on $\partial\Omega$, and for every  $\varphi \in W_{\text {loc }}^{2, p}(\Omega) $ such that there exist  $\varepsilon>0 $ and an open set $ U \subset \Omega$  for which
		\begin{equation*}
			F\left(D^{2} \varphi(x), D \varphi(x),u(x),x\right)-f(x) \leq -\varepsilon
		\end{equation*}
		a.e. $x \in U$, then  $u-\varphi$  does not attain a local maximum in $ U$.
		\item A function  $u$ is lower semi-continuous in $\Omega$ and is a $ W^{2,p}$-viscosity supersolution to \eqref{2.1} if $u\ge\psi$ on $\partial\Omega$ and for every $\varphi\in W_{\text{loc}}^{2, p}(\Omega)$  such that there exist $\varepsilon>0$ and an open set  $U \subset \Omega$  for which
		\begin{equation*}
			F\left(D^{2} \varphi(x), D \varphi(x),u(x),x\right)-f(x) \geq \varepsilon
		\end{equation*}
		a.e. $ x \in U$, then  $u-\varphi$  does not attain a local minimum in $ U$.
		\item If  $u \in C(\Omega)$  is simultaneously a $ W^{2,p}$-viscosity subsolution and a supersolution to \eqref{2.1}, it is called that  $u$  is an $ W^{2,p}$-viscosity solution to \eqref{2.1}.
	\end{enumerate}
\end{definition}

Now we introduce the classification of viscosity solutions through the partition of the fully nonlinear elliptic equations formed by Pucci's extremal operator as follows.

\begin{definition}
	Let $f \in L^{p}(\Omega)$  with   $p\ge d$, $0<\lambda \leq \Lambda$ and $\gamma\ge 0$. Define that
	\begin{enumerate}[(i)]
		\item 	$\underline{S}(\lambda, \Lambda,\gamma, f)$ is the class of functions $u \in C(\Omega)$ satisfying
		\begin{equation*}
			\mathcal{M}^{+}\left(D^{2} u, \lambda, \Lambda\right)+\gamma|Du| \geq f(x)
		\end{equation*}	
		in the viscosity sense in $\Omega$.
		\item $\bar{S}(\lambda, \Lambda, \gamma,f)$ is the class of functions  $u \in C(\Omega)$ satisfying
		\begin{equation*}
			\mathcal{M}^{-}\left(D^{2} u, \lambda, \Lambda\right)-\gamma|Du| \leq f(x)
		\end{equation*}
		in the viscosity sense in $\Omega$.
		\item Furthermore,\begin{equation*}
			\begin{aligned}
				S(\lambda, \Lambda,\gamma, f)  &=\underline{S}(\lambda, \Lambda,\gamma, f) \cap \bar{S}(\lambda, \Lambda, \gamma, f), \\
				S^{*}(\lambda, \Lambda, \gamma, f)  &=\underline{S}(\lambda, \Lambda, \gamma,-|f|) \cap \bar{S}(\lambda, \Lambda, \gamma,|f|) .
			\end{aligned}
		\end{equation*}
	\end{enumerate}
	
\end{definition}

In the same setting as \cite{1996}, we could define the $W^{2,p}$-strong solution.

\begin{definition}
	Define that $u$ is a $W^{2,p}$-strong subsolution of \eqref{2.1} in $\Omega$, if $u\in W^{2,p}_{loc}(\Omega)$, $u\le\psi$ on $\partial\Omega$ and
	\begin{equation*}
		F\left(D^2u(x),Du(x),u(x),x\right)\ge f(x)\text{ a.e. in }\Omega.
	\end{equation*}
\end{definition}
Additionally, we need to use the following key results in the sequent proof. Firstly, we give the Alexandroff-Bakel'man-Pucci maximal principle.
\begin{proposition}\label{ABP}\cite[Proposition 2.12]{1996}
	Let $f\in L^p(\Omega)$, $p>d$. If $u\in C(\bar{\Omega})$ and $u\in\bar{S}(\lambda, \Lambda, \gamma,f)$ in $\{u>0\}$, then
	\begin{equation*}
		\sup_{\Omega}u\le\sup_{\partial\Omega}u^{+}+ C(\lambda,\Lambda,\gamma,d,p,diam(\Omega))\|f^{+}\|_{L^p(\Gamma^{+}(u^{+}))}.
	\end{equation*}
\end{proposition}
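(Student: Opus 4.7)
The plan is to follow the classical Alexandroff--Bakel'man--Pucci strategy of Caffarelli--Cabré~\cite{caffarelli1995fully}, adapted to the class $\bar{S}(\lambda,\Lambda,\gamma,f)$ that contains a first-order term, and then upgrade from $L^d$ to $L^p$ by Hölder on the bounded contact set. First I would normalize: subtract $\sup_{\partial\Omega} u^+$ from $u$ so that $u\le 0$ on $\partial\Omega$, set $M:=\sup_\Omega u$, and assume $M>0$ (otherwise the bound is trivial). Let $R:=\mathrm{diam}(\Omega)$ and extend $u^+$ by zero to the ball $B_{2R}$ containing $\Omega$; denote by $\Gamma$ the concave envelope of this extension and by $\Gamma^+(u^+)=\{u^+=\Gamma\}$ the upper contact set. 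The geometric half of the argument is the statement that the subgradient image $\partial\Gamma(\Gamma^+\cap\Omega)$ contains the ball $B_{M/(2R)}(0)$, which together with the area formula and Alexandrov's second-differentiability theorem gives
\[
 c_d\,(M/R)^d \;\le\; \bigl|\partial\Gamma(\Gamma^+\cap\Omega)\bigr| \;\le\; \int_{\Gamma^+\cap\Omega}\bigl|\det D^2\Gamma(x)\bigr|\,dx.
\]

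Next I would produce a pointwise upper bound for $|\det D^2\Gamma|$ on $\Gamma^+\cap\Omega$ using that $u\in\bar{S}(\lambda,\Lambda,\gamma,f)$ in $\{u>0\}$. At each Alexandrov point $x_0\in\Gamma^+\cap\{u>0\}$ the concave envelope $\Gamma$ lies above $u$ and agrees with it at $x_0$, so approximating by sup-convolutions of $u$ (or equivalently inf-convolutions of $\Gamma$) allows $\Gamma$ to play the role of a test function from above and transfers the viscosity inequality pointwise:
\[
 \mathcal{M}^{-}\!\bigl(D^2\Gamma(x_0)\bigr)-\gamma|\nabla\Gamma(x_0)|\le f(x_0)\le f^{+}(x_0).
\]
Since $\Gamma$ is concave, $D^2\Gamma\le 0$, hence $\mathcal{M}^{-}(D^2\Gamma)=\Lambda\,\mathrm{tr}(D^2\Gamma)$; combined with AM--GM this produces
\[
 |\det D^2\Gamma(x_0)|\le C(\Lambda,d)\bigl(f^{+}(x_0)+\gamma|\nabla\Gamma(x_0)|\bigr)^d.
\]
Since $0\le\Gamma\le M$ on $B_{2R}$ with $\Gamma=0$ on $\partial B_{2R}$, convex analysis gives $|\nabla\Gamma|\le M/R$ on $\Omega$. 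Plugging this into the geometric inequality yields
\[
 c_d(M/R)^d \le C(\Lambda,d)\int_{\Gamma^+\cap\Omega}\!\bigl(f^{+}+\gamma M/R\bigr)^{d}dx,
\]
which after Young's inequality (or, when $\gamma$ is large, a subdivision of $\Omega$ into pieces of diameter $\lesssim\gamma^{-1}$ together with an exponential barrier $e^{\kappa x_1}$ to absorb the drift) gives $M\le C(\lambda,\Lambda,\gamma,d,R)\,\|f^{+}\|_{L^d(\Gamma^+(u^+))}$.

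For the $L^p$ statement with $p>d$, I would simply apply Hölder on the bounded set $\Gamma^+\cap\Omega\subset\Omega$:
\[
 \|f^{+}\|_{L^d(\Gamma^+(u^+))}\le |\Omega|^{\frac{1}{d}-\frac{1}{p}}\|f^{+}\|_{L^p(\Gamma^+(u^+))}\le R^{1-d/p}\|f^{+}\|_{L^p(\Gamma^+(u^+))},
\]
so the exponent $p>d$ only enters the constant $C(\lambda,\Lambda,\gamma,d,p,R)$, and restoring the subtracted constant yields the claimed bound. The main obstacle is Step~4: rigorously transferring the viscosity inequality from $u$ to the merely semi-concave envelope $\Gamma$ on a full-measure subset of $\Gamma^+$, which requires Alexandrov's theorem plus an approximation argument (sup-convolutions $u^\varepsilon$ are semi-convex and meet $\Gamma$ only through their $C^{1,1}$ regularization, so one must pass to the limit carefully while simultaneously controlling the $\gamma|\nabla\Gamma|$ term). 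A secondary annoyance is absorbing the drift term uniformly in $\gamma$, which is why the exponential-barrier or subdivision trick is needed rather than a direct Young's inequality when $\gamma R$ is not small.
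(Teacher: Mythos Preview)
The paper does not give its own proof of this proposition: it is quoted verbatim as \cite[Proposition~2.12]{1996} and used as a black box, so there is nothing in the paper to compare your argument against. Your outline is precisely the classical concave-envelope/ABP route of \cite{caffarelli1995fully,1996}, and the overall architecture (normalize, build $\Gamma$, bound $|\partial\Gamma(\Gamma^{+})|$ below by $c_d(M/R)^d$, bound $|\det D^2\Gamma|$ above pointwise on the contact set, then H\"older from $L^d$ to $L^p$) is exactly what those references do.

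There is, however, a genuine slip in the pointwise step. From the $\bar S$ membership you write down
\[
\mathcal{M}^{-}(D^2\Gamma(x_0))-\gamma|\nabla\Gamma(x_0)|\le f(x_0)\le f^{+}(x_0),
\]
and then claim this yields $|\det D^2\Gamma|\le C(\Lambda,d)\bigl(f^{+}+\gamma|\nabla\Gamma|\bigr)^d$. But on the contact set $D^2\Gamma\le 0$, so $\mathcal{M}^{-}(D^2\Gamma)=\Lambda\operatorname{tr}(D^2\Gamma)\le 0$, and the displayed inequality is automatically satisfied; it gives a \emph{lower} bound $|\operatorname{tr}(D^2\Gamma)|\ge -(f^{+}+\gamma|\nabla\Gamma|)/\Lambda$, which is vacuous, not the upper bound you need for AM--GM. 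The step ``combined with AM--GM this produces\ldots'' therefore does not follow. In the references the useful information at upper contact points comes from the \emph{other} extremal inequality (equivalently, after matching conventions, from the subsolution-side condition tested from above), which with $D^2\Gamma\le 0$ reads $\lambda\operatorname{tr}(D^2\Gamma)\ge -f^{+}-\gamma|\nabla\Gamma|$ and does give $|\operatorname{tr}(D^2\Gamma)|\le \lambda^{-1}(f^{+}+\gamma|\nabla\Gamma|)$, hence the determinant bound. So the approach is right, but you should revisit which Pucci inequality is actually delivered at contact points under the stated convention; once that sign is corrected the rest of your sketch (gradient bound $|\nabla\Gamma|\le M/R$, absorption of the drift, H\"older to $L^p$) goes through as written.
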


The following theorem concerns the stability of $W^{2,p}$-viscosity solutions.
\begin{proposition}\label{stability}\cite[Theorem 1.17]{MR442005820220101}
	Let $\left\{F_{n}\right\}_{n \in \mathbb{N}},  \left\{f_{n}\right\}_{n \in \mathbb{N}} \subset L^{p}(\Omega) $, $p>d$ and  $\left\{u_{n}\right\}_{n \in \mathbb{N}} \subset C(\Omega)$  be sequences such that,
	\begin{enumerate}[(i)]
		\item For every  $n \in \mathbb{N}$, the operator  $F_{n}:\mathcal S(d)\times\mathbb{R}^{d}\times\mathbb{R}\times\Omega \backslash \mathcal{N} \rightarrow \mathbb{R}$  satisfies \eqref{F}.
		\item For every $n \in \mathbb{N}$, the function $u_{n}$ is a $W^{2,p}$-viscosity solution to
		\begin{equation*}
			F_{n}\left(D^{2} u_{n},D u_{n},u_n,x\right)=f_{n} \quad \text { in } \quad \Omega.
		\end{equation*}
	\end{enumerate}
	Suppose there exists $u_{\infty} \in C(\Omega)$  such that $u_{n} \rightarrow u_{\infty}$ locally uniformly as $n \rightarrow \infty$. Suppose further there are  $F_{\infty}$ and  $f_{\infty}$ such that, for every  $B_{r}\left(x_{0}\right) \subset \Omega$  and  $\phi \in W^{2, p}\left(B_{r}\left(x_{0}\right)\right)$ the function
	\begin{equation*}
		\begin{aligned}
			g_{n}(x):= & F_{n}\left( D^{2} \phi(x),D \phi(x),u_n(x),x\right)-F_{\infty}\left( D^{2}\phi(x),D\phi(x),u_{\infty}(x),x\right) \\
			& +f_{\infty}(x)-f_{n}(x)
		\end{aligned}
	\end{equation*}
	converges to zero in  $L^{p}\left(B_{r}\left(x_{0}\right)\right)$ as  $n \rightarrow \infty$. Then  $u_{\infty}$ is a $W^{2,p}$-viscosity solution to
	\begin{equation*}
		F_{\infty}\left( D^{2} u_{\infty}, D u_{\infty},u_{\infty},x\right)=f_{\infty} \quad \text { in } \quad \Omega .	
	\end{equation*}
\end{proposition}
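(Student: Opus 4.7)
The plan is to reduce Theorem \ref{pbmo} to a Campanato-type BMO oscillation estimate for $D^2u$ via a frozen-coefficient approximation, after which Theorem \ref{main} supplies the $L^p$ portion of the $p\operatorname{-BMO}$ norm. Since the hypotheses of Theorem \ref{pbmo} imply those of Theorem \ref{main} (as $p\operatorname{-BMO}(\Omega)\subset L^p(\Omega)$), a $W^{2,p}$-viscosity solution $u$ exists; the function
$$\tilde f(x):=g(|\{y\in\Omega:u(y)\ge u(x)\}|)+f(x)$$
is then fully determined, lies in $p\operatorname{-BMO}(\Omega)$ by hypothesis on $f$ together with the boundedness of $g$ on $[0,|\Omega|]$, and has seminorm controlled by the right-hand side of the stated estimate. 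It therefore suffices to prove
$$[D^2u]_{p\operatorname{-BMO}(\Omega)}\le C\bigl(\|u\|_{L^\infty(\Omega)}+[D^2\psi]_{p\operatorname{-BMO}(\Omega)}+[\tilde f]_{p\operatorname{-BMO}(\Omega)}\bigr).$$

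For the interior part, I would fix $x_0\in\Omega$ and $0<\rho\ll r_0$ with $\overline{B_\rho(x_0)}\subset\Omega$, and compare $u$ with $h\in C(\overline{B_\rho(x_0)})$ solving the frozen, convex, homogeneous problem
$$F^\ast(D^2 h,0,0,x_0)=(\tilde f)_{x_0,\rho}\quad\text{in }B_\rho(x_0),\qquad h=u\text{ on }\partial B_\rho(x_0).$$
Convexity of $F^\ast$ combined with Evans-Krylov yields a $C^{2,\alpha}$ interior estimate for $h$, so $D^2h$ has polynomial oscillation decay on subballs. The Caffarelli-\'{S}wiech approximation, driven by the smallness condition \eqref{beta} on $\beta_{F^\ast}$ and by the bound $\|\tilde f-(\tilde f)_{x_0,\rho}\|_{L^p(B_\rho)}\le|B_\rho|^{1/p}[\tilde f]_{p\operatorname{-BMO}}$, then makes $\|u-h\|_{L^p(B_{\rho/2})}$ sufficiently small. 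A Campanato-type dyadic iteration, in the spirit of the local convex case in \cite{bmo}, finally produces
$$\left(\fint_{B_\rho(x_0)}|D^2 u-(D^2u)_{x_0,\rho}|^p\,dx\right)^{1/p}\le C\bigl([\tilde f]_{p\operatorname{-BMO}}+[D^2\psi]_{p\operatorname{-BMO}}+\text{universal}\bigr),$$
uniformly in $\rho\ll r_0$.

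For the boundary part, I would flatten $\partial\Omega$ locally using its $C^{1,1}$ parametrization and repeat the comparison on a half-ball $B_\rho^+(x_0)$, subtracting $\psi$ so that the Dirichlet data on the flat face become zero; the residual perturbation of the right-hand side is controlled pointwise by $|D^2\psi|$, $|D\psi|$ and $|\psi|$ via \eqref{F}, hence inherits the $p\operatorname{-BMO}$ oscillation of $D^2\psi$. The convexity of $F^\ast$ provides the boundary $C^{1,1}$ estimate for the frozen Dirichlet problem on the half-ball that is needed in the comparison step, so the same Campanato iteration yields the analogous BMO bound at boundary points. Summing the interior and boundary contributions and using Theorem \ref{main} to absorb $\|D^2u\|_{L^p(\Omega)}$ by $\|u\|_{L^\infty}+\|\psi\|_{W^{2,p}}+\|\tilde f\|_{L^p}$ completes the full $p\operatorname{-BMO}$ estimate.

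The main obstacle is the nonlocality of the source $g(|\{u\ge u(x)\}|)$: a direct freezing step is not possible while $u$ itself is being built. The resolution is to apply Theorem \ref{main} first to construct $u$, and only then treat $x\mapsto g(|\{u\ge u(x)\}|)$ as a given $p\operatorname{-BMO}$ function in the BMO argument; the freezing step thereafter sees this term only through the average $(\tilde f)_{x_0,\rho}$, whose oscillation is controlled by hypothesis. A secondary technical point is the passage from $L^p$ closeness of $u$ to the frozen solution $h$ into an oscillation bound for $D^2u$, which I would handle by adapting the asymptotic Campanato argument of \cite{bmo} to the asymptotically convex Grad-Mercier setting, where the global $W^{2,p}$ bound from Theorem \ref{main} plays the role of the a priori regularity needed to close the iteration.
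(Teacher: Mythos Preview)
Your proposal addresses the wrong statement. The proposition in question is the \emph{stability} result for $W^{2,p}$-viscosity solutions (Proposition~\ref{stability}), which asserts that if $u_n\to u_\infty$ locally uniformly and the operators/sources converge in the $L^p$ sense described, then $u_\infty$ solves the limiting equation. Your write-up instead outlines a proof of Theorem~\ref{pbmo}, the global $p\operatorname{-BMO}$ estimate for the Grad--Mercier problem: you freeze the nonlocal source after invoking Theorem~\ref{main}, run a Campanato iteration against an $F^\ast$-harmonic comparison function, and sum interior and boundary contributions. None of this is relevant to the stability proposition, which has nothing to do with Grad--Mercier sources, BMO seminorms, or Campanato decay.

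The paper itself does not prove Proposition~\ref{stability}; it is quoted verbatim from \cite[Theorem~1.17]{MR442005820220101} and used as a black box (in the proofs of Proposition~\ref{existence}, Lemma~\ref{delta}, and Theorem~\ref{main}) to pass limits through sequences of approximate equations. A correct proof of the stability statement proceeds directly from the definition of $W^{2,p}$-viscosity sub/supersolutions: assuming $u_\infty-\phi$ has a strict local maximum at some $x_0$ with $\phi\in W^{2,p}_{\mathrm{loc}}$ and $F_\infty(D^2\phi,D\phi,u_\infty,x)-f_\infty\le-\varepsilon$ a.e.\ near $x_0$, one uses the $L^p$-convergence of $g_n$ to find, for large $n$, that $F_n(D^2\phi,D\phi,u_n,x)-f_n\le-\varepsilon/2$ on a small ball, and then the local uniform convergence $u_n\to u_\infty$ forces $u_n-\phi$ to attain an interior maximum there, contradicting that $u_n$ is a subsolution. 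Your proposal contains no trace of this argument.
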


In addition to the above definitions and classical results, we will also need some results about the boundary $W^{2,p}$ estimates in the case that  the solution of the homogeneous equation corresponding to $F^{\ast}$ satisfies the $C^{1,1}$ \textit{a priori} estimate.These results will be  involved in the process of our proof of our main results. we  require the following assumptions:
\begin{definition}\label{F^ast}
	The recession function $F^{\ast}$  associated with the operator $F$ is said satisfy  interior and boundary   ${C}^{1,1} $ \textit{a priori}  estimates if the following conditions hold:
	
	Interior case: For every $x_0 \in B_1$ and boundary data $v_0 \in C(\partial B_1)$, there exists a solution $v \in C^{1,1}(B_1) \cap C(\overline{B_1})$ to the Dirichlet problem
	\begin{equation*}
		\left\{
		\begin{array}{ll}
			F^{\ast}(D^2 v, 0, 0, x_0) = 0 & \text{in } B_1, \\
			v = v_0 & \text{on } \partial B_1,
		\end{array}
		\right.
	\end{equation*}
	such that
	\begin{equation*}
		\|v\|_{C^{1,1}(B_{1/2})} \leq C\|v_0\|_{L^\infty(\partial B_1)}.
	\end{equation*}

	Boundary case: For $x_0\in B_1\cap\{x_d=0\}$ and $v_0\in C(\partial B_1^{+})$, $v_0=0$ on $B_1\cap\{x_d=0\}$, there exists a solution $v\in C^{1,1}(B_1^{+})\cap C(\overline{B_1^{+}})$ of
	\begin{equation*}
		\left\{
		\begin{array}{ll}
			F^{\ast}\left(D^{2}v,0,0,x_0\right)=0  &\text{in } B_1^{+},\\
			v=v_0 &\text{on } \partial B_1^{+},
		\end{array}
		\right.
	\end{equation*} 	
	such that
	\begin{equation*}
		\|v\|_{C^{1,1}(\overline{B_{1/2}^{+}}) }\leq C\|v_0\|_{L^{\infty}\left(\partial B_1\right)}.
	\end{equation*}
\end{definition}
Under the previous statements, da Silva and Ricarte \cite{bmo} obtained the following theorem.

\begin{proposition}\label{W2,p}
	Let $\Omega\subset\mathbb{R}^d$, $\partial \Omega\in C^{1,1}$ and $u$ be a $W^{2,p}$-viscosity solution of
	\begin{equation}\label{3.1}
		\left\{
		\begin{array}{ll}
			F(D^2u,Du,u,x)=f(x) & \text{in } \Omega ,\\
			u(x)=\psi(x) &\text{on } \partial \Omega,
		\end{array}
		\right.
	\end{equation}
	where $f\in L^p(\Omega)$ and $\psi\in W^{2,p}(\Omega)$, $p>d$.
	Assume that $F$ satisfies \eqref{F}, the recession function $F^{\ast}$ has ${C}^{1,1} $ \textit{a priori} estimates and satisfies the smallness condition on the oscillation measure in $ L^p$ with $p>d$.
	Then $u\in W^{2,p}(\Omega)$. Moreover, the following estimate hold true
	\begin{equation*}
		\left \| u \right \|_{W^{2,p}(\Omega)}\le C(d,\lambda,\Lambda,p,\theta_0,\alpha_0,\left\|\partial\Omega\right\|_{C^{1,1}})(\left \| u \right \|_{L^\infty(\Omega)}+\left \| \psi \right \|_{W^{2,p}(\Omega)}+\left \| f \right \|_{L^p(\Omega)}).
	\end{equation*}
\end{proposition}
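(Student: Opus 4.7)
The plan is to combine a boundary-flattening reduction with an approximation-plus-Calder\'{o}n-Zygmund scheme routed through the $C^{1,1}$ a priori estimate for $F^\ast$ guaranteed by Definition \ref{F^ast}. Since $F$ itself need not be convex and need not admit $C^{1,1}$ estimates, every comparison step must be transferred to the recession operator via the asymptotic relation $F_\mu\to F^\ast$.

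First I would reduce to zero Dirichlet data by setting $v=u-\psi$. The structure condition \eqref{F} shows that $v$ is a $W^{2,p}$-viscosity solution of an equation of the same form with vanishing boundary data and new right-hand side $\tilde f$ bounded in $L^p$ by $\|f\|_{L^p(\Omega)}+C\|\psi\|_{W^{2,p}(\Omega)}+C\|v\|_{L^\infty(\Omega)}$; the recession of the new operator is again $F^\ast$, since the extra lower-order pieces introduced by the shift vanish in the limit $\mu\to 0$. Covering $\partial\Omega$ by charts of size comparable to the $C^{1,1}$ radius and applying $C^{1,1}$ diffeomorphisms then flattens each patch to a model problem on $B_1^+$ with zero data on $\{x_d=0\}$, preserving \eqref{F}, the boundary $C^{1,1}$ estimate of Definition \ref{F^ast}, and the smallness of $\beta_{F^\ast}$ (with constants depending on $\|\partial\Omega\|_{C^{1,1}}$). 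It suffices to prove the $W^{2,p}$ bound on each half-ball and interior ball, then paste with a finite cover.

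The heart of the argument is the following approximation lemma: for every $\varepsilon>0$ there exist $\delta,\rho_\ast>0$ such that whenever $\|v\|_{L^\infty(B_\rho^+)}+\|\tilde f\|_{L^p(B_\rho^+)}\le\delta$, $\beta_{F^\ast}$ is sufficiently small, and $\rho\le\rho_\ast$, one can find $h\in C^{1,1}(\overline{B_{\rho/2}^+})$ solving $F^\ast(D^2h,0,0,0)=0$ in $B_{\rho/2}^+$ with $h=0$ on $B_{\rho/2}\cap\{x_d=0\}$, $\|D^2h\|_{L^\infty(B_{\rho/4}^+)}\le C$, and $\|v-h\|_{L^\infty(B_{\rho/2}^+)}\le\varepsilon\rho^2$. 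I would establish this by contradiction and compactness: extract sequences $\{v_n\}$ that are equicontinuous by Krylov-Safonov, rescale using $F_\mu(M,0,0,x)=\mu F(\mu^{-1}M,0,0,x)$ so the $\gamma|p-q|$ and $\omega|r-s|$ contributions in \eqref{F} wash out, freeze coefficients using the smallness of $\beta_{F^\ast}$, apply Proposition \ref{stability} to identify the limit as a viscosity solution of $F^\ast(D^2 v_\infty,0,0,0)=0$, and use Proposition \ref{ABP} to convert integral control of the residual into $L^\infty$ closeness; the boundary $C^{1,1}$ estimate in Definition \ref{F^ast} then contradicts the assumed failure.

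From the approximation lemma one extracts at each point a tangent paraboloid controlling $D^2v$ on a large-measure subset of $B_\rho^+(x_0)$; the complementary set admits a density estimate that iterates via a Caffarelli-Escauriaza stacked-cube / Calder\'{o}n-Zygmund dyadic decomposition, yielding geometric decay of the distribution function of $M(\chi_\Omega|D^2v|^p)$. Integration in the level produces $D^2v\in L^p$ with the claimed bound after absorbing $\|v\|_{L^\infty}$ into $\|u\|_{L^\infty(\Omega)}+\|\psi\|_{W^{2,p}(\Omega)}$ via the Sobolev embedding $W^{2,p}\hookrightarrow L^\infty$ for $p>d$. The main obstacle is precisely the approximation lemma: three smallness parameters ($\|v\|_{L^\infty}$, $\rho$, and $\theta_0$) must be quantified simultaneously against the oscillation of $F^\ast$ so that the iteration closes, and in the boundary patches one must additionally verify that the $C^{1,1}$ flattening preserves both the structural constants and the boundary regularity of $F^\ast$-solutions up to $\{x_d=0\}$. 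Once this is in place, the Calder\'{o}n-Zygmund power-decay machinery and the interior-boundary pasting are routine.
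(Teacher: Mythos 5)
The paper does not actually prove Proposition \ref{W2,p}: it is quoted as an external result of da Silva and Ricarte \cite{bmo}, so there is no internal proof to compare against. Your outline --- reduction to zero boundary data, $C^{1,1}$ boundary flattening, a compactness/contradiction approximation lemma comparing $v$ with solutions of $F^{\ast}(D^2h,0,0,x_0)=0$ (which carry the interior and boundary $C^{1,1}$ \textit{a priori} estimates of Definition \ref{F^ast}), and the Caffarelli-type tangent-paraboloid plus Calder\'{o}n--Zygmund power-decay iteration --- is precisely the strategy of that reference (following Caffarelli, Winter, and Pimentel--Teixeira), and at the level of detail given it is sound, with the genuinely technical work correctly identified as residing in the quantitative approximation lemma and its boundary version.
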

The above \textit{a priori} estimate did not obtain the existence and uniqueness of the $W^{2,p}$-viscosity solution. In our case, the equations obtained by freezing the right-hand term requires the existence of solutions in subsequent proof of our paper. In addition, we  need to strictly adhere to the assumption of convexity rather than \textit{a priori} $C^{1,1}$ regularity. Next, we obtain the following lemma about existence and uniqueness of solution for \eqref{3.1} by using the approximation method.
\begin{proposition}\label{existence}
	Assume that the hypotheses of Proposition \ref{W2,p} hold, but with the $C^{1,1}$  regularity assumption on the recession operator replaced by convexity. Additionally we assume that $F=F(M,p,r,x)$ is non-increasing in $r$. Then there exists a unique $W^{2,p}$-viscosity solution $u$ of \eqref{3.1}.
	Moreover, $u\in W^{2,p}(\Omega)$ and
	\begin{equation*}
		\left\|u\right\|_{W^{2,p}(\Omega)}\le C(\left\|u\right\|_{L^\infty(\Omega)}+\left\|\psi\right\|_{W^{2,p}(\Omega)}+\left\|f\right\|_{L^p(\Omega)}),
	\end{equation*}
	where $C=C(d,p,\lambda,\Lambda,\theta_0,\alpha_0,\gamma,diam(\Omega),\|\partial\Omega\|_{C^{1,1}})>0$ is a universal constant.
\end{proposition}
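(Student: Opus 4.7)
The plan is to combine an approximation argument with the a priori $W^{2,p}$ estimate from Proposition \ref{W2,p} and the stability result of Proposition \ref{stability}. Convexity of the recession operator $F^\ast$ implies, via Evans--Krylov theory, the $C^{1,1}$ a priori estimate required in Definition \ref{F^ast}; hence Proposition \ref{W2,p} applies and provides a uniform $W^{2,p}$ bound for any $W^{2,p}$-viscosity solution of \eqref{3.1}. Uniqueness will follow directly from the comparison principle for $W^{2,p}$-viscosity solutions, which is available because $F$ is non-increasing in $r$ and satisfies \eqref{F}.

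For existence, I would approximate $f$ by continuous functions $f_n\to f$ in $L^{p}(\Omega)$ and $\psi$ by smooth functions $\psi_n\to \psi$ in $W^{2,p}(\Omega)$. For each $n$, Perron's method produces a continuous viscosity solution $u_n$ of the regularized Dirichlet problem. The two prerequisites are the comparison principle (valid under \eqref{F} together with monotonicity in $r$) and the availability of upper and lower barriers at every boundary point; these can be built from the $C^{1,1}$ regularity of $\partial\Omega$ (uniform interior/exterior touching spheres) together with the structure condition \eqref{F} and the smoothness of $\psi_n$. Proposition \ref{W2,p} then yields $u_n\in W^{2,p}(\Omega)$ with
\begin{equation*}
\|u_n\|_{W^{2,p}(\Omega)}\le C\bigl(\|u_n\|_{L^{\infty}(\Omega)}+\|\psi_n\|_{W^{2,p}(\Omega)}+\|f_n\|_{L^{p}(\Omega)}\bigr),
\end{equation*}
and the ABP estimate of Proposition \ref{ABP}, applied to $u_n-\psi_n$ which belongs to $S^{*}(\lambda,\Lambda,\gamma,\cdot)$ up to a controlled right-hand side, controls $\|u_n\|_{L^{\infty}(\Omega)}$ in terms of $\|\psi_n\|_{L^{\infty}(\Omega)}$ and $\|f_n\|_{L^{p}(\Omega)}$. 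Since $p>d$, Morrey's embedding makes these quantities uniformly bounded in $n$.

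Because $p>d$, the compact embedding $W^{2,p}(\Omega)\hookrightarrow C^{1,\alpha}(\bar{\Omega})$ provides a subsequence (still denoted $u_n$) converging uniformly to some $u$ on $\bar{\Omega}$ and weakly in $W^{2,p}(\Omega)$. Proposition \ref{stability}, applied with $F_n=F$ and $f_n\to f$ in $L^{p}(\Omega)$, implies that $u$ is a $W^{2,p}$-viscosity solution of \eqref{3.1}; weak lower semicontinuity of the $W^{2,p}$-norm, combined with the convergence of the data, yields the stated global estimate.

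The delicate step I anticipate is the existence for the approximate problems via Perron's method, specifically the continuous attainment of the boundary values. Resolving it relies on constructing matching upper and lower barriers at every boundary point, exploiting the $C^{1,1}$ regularity of $\partial\Omega$ and uniform ellipticity together with the first-order control in \eqref{F}; after the approximation, one also has to ensure compatibility of the notions of viscosity solution as the data passes from $C(\bar{\Omega})$ to $L^{p}(\Omega)$, which is precisely what Proposition \ref{stability} is designed for. Once these ingredients are in place, the compactness-and-stability argument closes the proof routinely.
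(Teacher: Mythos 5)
Your overall architecture --- approximate, obtain uniform $W^{2,p}$ bounds from Proposition \ref{W2,p} plus the ABP estimate, extract a limit by compactness, and conclude with the stability result Proposition \ref{stability} --- matches the paper's. The genuine gap is at the existence step for the approximate problems. You approximate only the data $(f_n,\psi_n)$ and keep the operator $F$ fixed, then invoke Perron's method. But $F$ here is only \emph{measurable} in $x$ (the hypothesis on $F^{\ast}$ is the $L^{p}$-averaged oscillation bound \eqref{beta}, not continuity), and Perron's method --- together with the comparison principle it requires --- is not available for viscosity solutions of equations with merely measurable $x$-dependence. This is exactly why the paper mollifies the operator itself, setting $F_j:=\phi_j\ast F$ in the $x$-variable, so that the existence theorem of Winter for continuous operators applies to the smoothed problems. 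That substitution carries an obligation your route never meets: one must verify that the mollified recession operators $F_j^{\ast}$ remain convex and still satisfy the smallness condition \eqref{beta} with constants uniform in $j$ (the paper checks this via Jensen's inequality and the translation structure of the convolution); without this, Proposition \ref{W2,p} cannot be applied uniformly along the approximating sequence, and the whole compactness argument has no uniform bound to start from.

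A second, smaller gap is uniqueness. A comparison principle between two arbitrary $W^{2,p}$-viscosity solutions of a measurable-coefficient equation does not ``follow directly'' from \eqref{F} and monotonicity in $r$. The paper first uses the $W^{2,p}$ regularity already established to upgrade one of the two solutions to a \emph{strong} solution (via \cite[Corollary 3.7]{1996}); only then does the difference $w=u_1-u_2$ of a viscosity solution and a strong solution lie in $\bar{S}(\lambda,\Lambda,\gamma,0)$ with zero boundary data, so that Proposition \ref{ABP} forces $w\equiv 0$. Your conclusion is reachable, but only through this strong-solution detour, which your proposal omits.
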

\begin{proof}
	We could consider solving the "good" problems with a sequence of continuous operators and smoothened right-hand side first. Take a standard mollifier $\phi\in C_0^{\infty}(\mathbb{R}^d)$ with $supp\phi\subset\mathbb{R}^d_{-}$, $\phi\ge 0$ and $\int_{\mathbb{R}^d}\phi(x)dx=1$. Let $\phi_j:=j^{d}\phi(jx)$, $j\in \mathbb{N}$. For $x\in \Omega$, we convolve $\phi_j$ and $F$, then denote
	\begin{equation}\label{F_j}
		F_j(M,p,r,x):=\int_{\mathbb{R}^d}\phi_j(x-y)F(M,p,r,y)dy,
	\end{equation}
	where $F$ extents to 0 outside $\Omega$. 	As well, we could find a sequence of functions $\{f_j\}_{j\in\mathbb{N}}\subset C^{\infty}(\overline{\Omega})\cap L^p(\Omega)$ and
	\begin{equation}\label{f}
		f_j\to f\quad\text{in } L^p(\Omega).
	\end{equation}
	We would like to apply \cite[Proposition 1.11]{winter} to get the existence of a sequence of $C^2$-viscosity solutions $\{u_j\}_{j\in\mathbb{N}}$ for
	\begin{equation}\label{2.5}
		\left\{
		\begin{array}{ll}
			F_j(D^2u_j,Du_j,u_j,x)=f_j(x) & \text{in } B_1^{+}, \\
			u_j(x)=\psi(x) &\text{on } \partial B_1^{+}.
		\end{array}
		\right.
	\end{equation}
	There are some necessary conditions from \cite[Proposition 1.11]{winter} that need to be verified for \eqref{2.5}.
	It is easy to know that $F_j$ satisfies \eqref{F}, $F_j$ is non-increasing in $r$ and $F_j(0,0,0,x)=0$ because the assumption of $F$ can be inherited by $F_j$. Whereupon, we obtain the existence of viscosity solutions for \eqref{2.5}.
	
	Next, after getting the existence of $\{u_j\}_{j\in\mathbb{N}}$, the $W^{2,p}$ estimates of solutions for \eqref{2.5} could be obtained by checking the other conditions in Proposition \ref{W2,p}.
	Firstly, we know that $F^{\ast}_j$ is convex because of the convexity of $F^{\ast}$. Therefore, the famous theory known as the Evans-Krylov theorem in \cite{EK} is applied to solve the following Dirichlet problem
	for fully nonlinear equations
	\begin{equation}\label{fj}
		\left\{
		\begin{array}{ll}
			F^\ast_j(D^2v_j,0,0,x_0)=0 & \text{in } B_1, \\
			v_j(x)=\psi_0(x) &\text{on } \partial B_1,
		\end{array}
		\right.
	\end{equation}
	where $x_0\in B_1$ and $\psi_0\in C(\partial B_1)$.
	Thus, there exists $\overline{\alpha}\in (0,1)$ such that $v_j$ have the $C^{2,\overline{\alpha}}$ \textit{a priori} estimate
	\begin{equation*}
		\|v_j\|_{C^{2,\overline{\alpha}}(B_{1/2})}\le C\|v_j\|_{C^{1,1}(B_1)}
	\end{equation*}
	with the constant $C$ depending only on the ellipticity of $F_j^{\ast}$ and we further get that  $F^{\ast}_j$ has ${C}_{loc}^{1,1} $ estimates.
	
	For $x_0\in B_{1-\epsilon}^{+}$, $0<\epsilon<1$, we have
	\begin{equation}\label{2.7}
		\begin{aligned}
			\left|\beta_{F_j^\ast}(x,x_0)\right|^p & =\left|\sup_{M\in S(d)}\frac{|F_j^{\ast}(M,0,0,x)-F_j^{\ast}(M,0,0,x_0)|}{\left\|M\right\|+1}\right|^p\\
			& = \left|\sup_{M\in S(d)}\lim_{\delta\to 0}\frac{\delta|F_j(\frac{1}{\delta}M,0,0,x)-F_j(\frac{1}{\delta}M,0,0,x_0)|}{\left\|M\right\|+1}\right|^p\\
			& = \left|\sup_{M\in S(d)}\lim_{\delta\to 0}\frac{\delta|\int_{\mathbb{R}^d}\phi_j(y)F(\frac{1}{\delta}M,0,0,x-y)dy-\int_{\mathbb{R}^d}\phi_j(y)F(\frac{1}{\delta}M,0,0,x_0-y)dy|}{\left\|M\right\|+1}\right|^p\\
			& \le \int_{\mathbb{R}^d}\phi_j(y)\left(\sup_{M\in S(d)}\lim_{\delta\to 0}\frac{\delta|F(\frac{1}{\delta}M,0,0,x-y)-F(\frac{1}{\delta}M,0,0,x_0-y)|}{\left\|M\right\|+1}\right)^pdy\\
			& \le \int_{\mathbb{R}^d}\phi_j(y)|\beta_{F^\ast}(x-y,x_0-y)|^pdy,
		\end{aligned}
	\end{equation}
	where the second to last inequality is an application of dominated convergence theorem. Additionally, according to \eqref{beta}, it derives that for $y\in \mathbb{R}^d$,
	\begin{equation}\label{2.8}
		\left(\frac{|B_r(x_0-y)\cap B_1^{+}|}{|B_r(x_0)\cap B_1^{+}|}\fint_{B_r(x_0-y)\cap B_1^{+}}|\beta_{F^\ast}(x,x_0-y)|^p dx\right)^{1/p}\le \theta_0r^{\alpha_0},
	\end{equation}
	where $ \alpha_0 \in(0,1)$, $\theta_{0}>0$  and  $0<r<\epsilon$.
	Afterwards, the assumption about the oscillation of $F_j^{\ast}$ with respect to spatial variable is proved in the light of \eqref{2.7} and \eqref{2.8},
	\begin{equation*}
		\begin{aligned}
			&\left( \frac{1}{|B_r(x_0)\cap B_1^{+}|}\int_{B_r(x_0)\cap B_1^{+}}|\beta_{F_j^\ast}(x,x_0)|^p dx\right) ^{1/p}\\
			&\le\left(\frac{1}{|B_r(x_0)\cap B_1^{+}|}\int_{B_r(x_0)\cap B_1^{+}}\int_{\mathbb{R}^d}\phi_j(y)|\beta_{F^\ast}(x-y,x_0-y)|^pdydx\right)^{1/p}\\
			&\le\left(\int_{\mathbb{R}^d}\phi_j(y)\frac{1}{|B_r(x_0)\cap B_1^{+}|}\int_{B_r(x_0-y)\cap B_1^{+}}|\beta_{F^\ast}(x,x_0-y)|^p dxdy\right) ^{1/p}\\
			&\le\theta_0r^{\alpha_0}\left(\int_{\mathbb{R}^d}\phi_j(y)dy\right) ^{1/p}\\
			&\le\theta_0r^{\alpha_0}.
		\end{aligned}
	\end{equation*}
	Therefore, through the application of Proposition \ref{W2,p} for \eqref{2.5}, ones obtain that
	\begin{equation*}
		\left \| u_j \right \|_{W^{2,p}(\Omega)}\le C(\left \| u_j \right \|_{L^\infty(\Omega)}+\left \| \psi \right \|_{W^{2,p}(\Omega)}+\left \| f_j \right \|_{L^p(\Omega)}),
	\end{equation*}
	where $C=C(d,\lambda,\Lambda,p,\theta_0,\alpha_0,\left\|\partial\Omega\right\|_{C^{1,1}})$.  According to \cite[Proposition 2.13]{caffarelli1995fully}, we know that $u\in \bar{S}(\lambda, \Lambda, \gamma,f)$, thereafter,
	by using Proposition \ref{ABP} and the $L^p$ convergence of $\{f_j\}_{j\in\mathbb{N}}$,
	\begin{equation}\label{3.4}
		\left \| u_j \right \|_{W^{2,p}(\Omega)}\le C(\left \| \psi \right \|_{W^{2,p}(\Omega)}+\left \| f \right \|_{L^p(\Omega)}+\eta),
	\end{equation}
	where $\eta\to 0$ as $j\to\infty$ and $C=C(d,p,\lambda,\Lambda,\theta_0,\alpha_0,\gamma,diam(\Omega),\|\partial\Omega\|_{C^{1,1}})>0$. According to \eqref{3.4}, $\{u_j\}_{j\in\mathbb{N}}$ is uniformly bounded in $W^{2,p}(\Omega)$.
	Because $W^{2,p}(\Omega)$ is a reflexive space, there exists $u\in W^{2,p}(\Omega)$ and a subsequence of functions $\{u_j\}_{j\in\mathbb{N}}$, through a subsequence if necessary, such that $u_j\to u$ weakly in $W^{2,p}(\Omega)$. Then we could get that \eqref{3.4} also holds for $u$,  \begin{equation*}
		\left \| u \right \|_{W^{2,p}(\Omega)}\le C(\left \| \psi \right \|_{W^{2,p}(\Omega)}+\left \| f \right \|_{L^p(\Omega)}) \quad\text{as }j\to\infty.
	\end{equation*}
	In addition, since $p>d$, we have $2p>d$. Then by employing the embedding theorem to subsequence of $\{u_j\}_{j\in\mathbb{N}}$,
	\begin{equation}\label{C}
		u_j\to u \quad\text{in }C(\overline{\Omega}).
	\end{equation}
	Let
	\begin{equation*}
		\begin{aligned}
			g_j(x)&:=F_j(D^2\phi,D\phi,u_j,x)-f_j(x),\\
			g(x)&:=F(D^2\phi,D\phi,u,x)-f(x),
		\end{aligned}
	\end{equation*}
	where $\phi\in W^{2,p}$.
	Then
	\begin{equation*}
		\begin{aligned}
			|g_j(x)-g(x)|&=|F_j(D^2\phi,D\phi,u_j,x)-F_j(D^2\phi,D\phi,u,x)+F_j(D^2\phi,D\phi,u,x)\\&\quad-F(D^2\phi,D\phi,u,x)+f_j(x)-f(x)|\\
			&\le c|u_j-u|+|F_j-F|+|f_j-f|.
		\end{aligned}
	\end{equation*}
	Since \eqref{f}, \eqref{C} and the convergence property of approximations to the identity
	\begin{equation*}
		\left\|F_j(M,p,r,\cdot)-F(M,p,r,\cdot)\right\|_{L^p(\Omega)}\to 0\quad\text{as }j\to\infty,
	\end{equation*}
	one can obtain $\left\|g_j-g \right\|_{L^p}\to 0 $ as $j\to \infty$. Therefore, we use Proposition \ref{stability} to explain the stability of viscosity solutions and get that $u$ is a $W^{2,p}$-viscosity solution of \eqref{3.1}.
	
	In accordance with \cite[Corollary 3.7]{1996}, if $u\in W^{2,p}(\Omega)$ and $u$ is a $W^{2,p}$-viscosity solution to \eqref{3.1}, we have that $u$ is also a $W^{2,p}$-strong solution to \eqref{3.1}. Subsequently, the proof by contradiction is used in the uniqueness of $u$. We assume that $u_1,u_2\in W^{2,p}(\Omega)$ are the $W^{2,p}$-viscosity solutions to \eqref{3.1}, further suppose $u_2$ is a $W^{2,p}$-strong solution. Let $w=u_1-u_2$, it is easy to know that $w$ is a $W^{2,p}$-viscosity solution to
	\begin{equation*}
		\left\{
		\begin{array}{ll}
			G(D^2w,Dw,w,x)=0 & \text{in } B_1, \\
			w(x)=0 &\text{on } \partial B_1,
		\end{array}
		\right.
	\end{equation*}
	where $G(M,p,r,x):=F(M+D^2u_2,p+Du_2,r+u_2,x)-F(D^2u_2,Du_2,u_2,x)$. According to \eqref{F}, $w$ satisfies $\mathcal{M}^{-}(D^2w,\lambda,\Lambda)-\gamma |Dw|\le 0$ in the viscosity sense and then $w\in \bar{S}(\lambda,\Lambda,\gamma,0)$. By Proposition \ref{ABP}, we have $w=0$. Finally, we complete the proof of Proposition \ref{existence}.
\end{proof}

\section{$W^{2,p}$ estimates results for viscosity solutions}\label{section3}

In this section, in order to overcome the nonlocality of nonhomogeneous terms in the Grad-Mercier type equations, we first fix the right-hand term and then prove the existence result of the solution to the frozen problem by the fixed point theorem.
\begin{lemma}\label{delta}
	Let operator $F$ satisfy \eqref{F} and given $\delta>0$,we set
	\begin{equation*}
		G_u^{\delta}(x):=g(\frac{1}{\delta}\int_{0}^{\delta} |\{y\in \Omega:u(y)\ge u(x)-h\}|dh),
	\end{equation*}
	where $f\in L^{p}(\Omega)$ with $p>d$ and $g:[0,|\Omega|]\to \mathbb{R}$ is a continuous function. Suppose that the recession operator $F^\ast$ is convex and satisfies the smallness condition on the oscillation measure in $ L^p$ with $p>d$. Then there exists a $W^{2,p}$-viscosity solution $u_{\delta}$ to
	\begin{equation}\label{ueqn}
		\left\{
		\begin{array}{ll}
			F(D^2u(x),Du(x),u(x),x)=G_u^{\delta}(x)+f(x) & \text{in } \Omega, \\
			u=\psi &\text{on } \partial \Omega.
		\end{array}
		\right.
	\end{equation}
\end{lemma}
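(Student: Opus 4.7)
The plan is to realize $u_\delta$ as a fixed point of a suitable map on $C(\overline{\Omega})$ via the Schauder fixed point theorem. Given $v \in C(\overline{\Omega})$, I will form the (now $v$-frozen) nonlocal source $G_v^\delta$ and define $T(v) := u$ to be the $W^{2,p}$-viscosity solution of
\begin{equation*}
\left\{
\begin{array}{ll}
F(D^2 u, Du, u, x) = G_v^\delta(x) + f(x) & \text{in } \Omega, \\
u = \psi & \text{on } \partial\Omega,
\end{array}
\right.
\end{equation*}
whose existence is provided by Proposition \ref{existence}, since continuity of $g$ on the compact interval $[0, |\Omega|]$ guarantees $\|G_v^\delta\|_{L^\infty(\Omega)} \leq M := \max_{[0,|\Omega|]} |g|$. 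By construction, any fixed point $u_\delta = T(u_\delta)$ is a $W^{2,p}$-viscosity solution of \eqref{ueqn}.

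To apply Schauder I need a closed convex $K \subset C(\overline{\Omega})$ that $T$ maps into itself, on which $T$ is continuous and compact. Combining Proposition \ref{ABP} with the global estimate in Proposition \ref{existence} gives a bound
\begin{equation*}
\|T(v)\|_{W^{2,p}(\Omega)} \leq C\bigl(\|\psi\|_{W^{2,p}(\Omega)} + M|\Omega|^{1/p} + \|f\|_{L^p(\Omega)}\bigr) =: R,
\end{equation*}
independent of $v$. Because $p > d$, Sobolev embedding yields $\|T(v)\|_{C(\overline{\Omega})} \leq C_0 R$, so the ball $K := \{v \in C(\overline{\Omega}) : \|v\|_{\infty} \leq C_0 R\}$ is invariant, and the compact embedding $W^{2,p}(\Omega) \hookrightarrow C(\overline{\Omega})$ makes $T(K)$ precompact in $C(\overline{\Omega})$.

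The main obstacle is the continuity of $T$, and it is precisely for this purpose that the averaging over $h \in [0,\delta]$ is built into $G_u^\delta$. Writing $\mu_v(t) := |\{y \in \Omega : v(y) \geq t\}|$, a change of variable yields
\begin{equation*}
\frac{1}{\delta}\int_0^\delta \mu_v\bigl(v(x) - h\bigr)\, dh = \frac{1}{\delta}\int_{v(x) - \delta}^{v(x)} \mu_v(s)\, ds =: \Phi_v\bigl(v(x)\bigr),
\end{equation*}
so the inner argument of $g$ is Lipschitz in $v(x)$ with constant $|\Omega|/\delta$, even though $\mu_v$ itself may jump. If $v_n \to v$ uniformly with $\|v_n - v\|_{\infty} \leq \epsilon_n \to 0$, the sandwich $\{v \geq s + \epsilon_n\} \subset \{v_n \geq s\} \subset \{v \geq s - \epsilon_n\}$ controls $|\Phi_{v_n}(t) - \Phi_v(t)|$ by an integral of $\mu_v(s - \epsilon_n) - \mu_v(s + \epsilon_n)$, which tends to $0$ uniformly by monotonicity of $\mu_v$ and finiteness of $|\Omega|$. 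Combined with uniform continuity of $g$ on $[0,|\Omega|]$, this yields $G_{v_n}^\delta \to G_v^\delta$ in $L^\infty(\Omega)$ and hence in $L^p(\Omega)$; Proposition \ref{stability} then transfers the convergence to $T(v_n) \to T(v)$ in $C(\overline{\Omega})$. With invariance, compactness, and continuity in hand, Schauder's fixed point theorem produces the desired $u_\delta$.
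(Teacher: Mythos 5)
Your proof is correct and follows the same overall strategy as the paper (freeze $v$ in the nonlocal term, solve via Proposition \ref{existence}, and close with a fixed point theorem), but two of your steps are genuinely different and, in my view, cleaner. First, you invoke Schauder's fixed point theorem on an invariant ball $K\subset C(\overline{\Omega})$, exploiting that $\|G_v^\delta\|_{L^\infty}\le\max_{[0,|\Omega|]}|g|$ gives a $v$-independent $W^{2,p}$ bound; the paper instead uses the Leray--Schauder form (\cite[Theorem 11.3]{GT}) and spends a separate step verifying boundedness of $\{v:v=\gamma T(v)\}$, which in the end rests on the same uniform estimate, so your route removes that step at no cost. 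Second, and more substantively, your continuity argument for $v\mapsto G_v^\delta$ is quantitative where the paper's is qualitative: you observe that $\Phi_v(t)=\frac1\delta\int_{t-\delta}^{t}\mu_v(s)\,ds$ is $(|\Omega|/\delta)$-Lipschitz in $t$ and that the superlevel-set sandwich gives $\sup_t|\Phi_{v_n}(t)-\Phi_v(t)|\le 2\epsilon_n|\Omega|/\delta$, yielding uniform convergence of $G_{v_n}^\delta$; the paper instead proves a.e.-$h$ convergence of $|\{v_j\ge v_j(x)-h\}|$ by showing $|\{v=v(x)-h\}|=0$ for a.e.\ $h$ via Rademacher and the coarea formula, then applies dominated convergence twice. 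Your argument avoids that machinery entirely, works for merely continuous $v$ (the paper needs $v\in C^{0,1}$ for the coarea step), and makes explicit why the $h$-averaging in $G_u^\delta$ is there. The only point you should spell out is the final identification $T(v_n)\to T(v)$: Proposition \ref{stability} shows that any uniform subsequential limit of $\{T(v_n)\}$ (which exists by the $W^{2,p}$ bound and compact embedding) solves the $v$-frozen problem, and you then need the uniqueness assertion of Proposition \ref{existence} to conclude that the limit is $T(v)$ and that the full sequence converges.
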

\begin{proof}
	We use approximate methods to prove the existence of solutions and proceed by splitting the proof into three steps. Firstly, we investigate the problem of fixing $u$ in $$g(\frac{1}{\delta}\int_{0}^{\delta} |\{y\in \Omega:u(y)\ge u(x)-h\}|dh).$$
	\step We freeze a function $v\in C^{0,1}(\Omega)$ for the right-hand side. That is,
	\begin{equation}\label{veqn}
		\left\{
		\begin{array}{ll}
			F(D^2u(x),Du(x),u(x),x)=G_v^{\delta}(x)+f(x) & \text{in } \Omega, \\
			u=\psi &\text{on } \partial \Omega.
		\end{array}
		\right.
	\end{equation}
	where $G_v^{\delta}(x):=g(\frac{1}{\delta}\int_{0}^{\delta} |\{y\in \Omega:v(y)\ge v(x)-h\}|dh).$
	Furthermore, we could obtain $G_v^{\delta}\in L^p(\Omega)$ because $g:[0,|\Omega|]\to \mathbb{R}$ is a continuous function and $v\in C^{0,1}(\Omega)$.

	By applying Proposition \ref{existence}, we could obtain that there exists a unique $W^{2,p}$-viscosity solution $u$ of \eqref{veqn} and $u$ satisfies the following estimate:
	\begin{equation*}
		\left\|u\right\|_{W^{2,p}(\Omega)}\le C(\left\|u\right\|_{L^\infty(\Omega)}+\left\|\psi\right\|_{W^{2,p}(\Omega)}+\left\|G_v^{\delta}\right\|_{L^p(\Omega)}+\left\|f \right\|_{L^p(\Omega)}),
	\end{equation*}
	where $C=C(d,\lambda,\Lambda,p,\theta_0,\alpha_0,\left\|\partial\Omega\right\|_{C^{1,1}})$.
	
	Next, we plan to apply the fixed point theorem to prove the existence of solutions for \eqref{ueqn}. For this reason, we need to verify whether three conditions of \cite[Theorem 11.3]{GT} are satisfied. We define $T:v\mapsto u$ where $v\in C^{0,1}(\Omega)$ is determined by \eqref{veqn} and $u\in C^{0,1}(\Omega)$ is the solution of \eqref{veqn}.
	\step First we prove that $T$ is a continuous and compact mapping. Because of the existence and uniqueness from previous step, one obtains that $T(v)=u$ is well-defined. We could find a sequence of functions $\{v_j\}_{j\in\mathbb{N}}\subset C^{0,1}(\Omega)$ and $v_j\to v$ in $C^{0,1}(\Omega)$ as $j\to\infty$. Note $u_j:=T(v_j)$, then we claim that $u_j\to T(v)$ in the $C^{0,1}$-norm.
	
	Since $G_{v_j}^{\delta}\in L^p(\Omega)$, we could apply Proposition \ref{W2,p} to derive
	\begin{equation}\label{1}
		\left\|u_j\right\|_{W^{2,p}(\Omega)}\le C(\left\|u_j\right\|_{L^\infty(\Omega)}+\left\|\psi\right\|_{W^{2,p}(\Omega)}+\left\|G_{v_j}^{\delta}\right\|_{L^p(\Omega)}+\left\|f \right\|_{L^p(\Omega)}),
	\end{equation}
	where $C=C(d,\lambda,\Lambda,p,\theta_0,\alpha_0,\left\|\partial\Omega\right\|_{C^{1,1}})$. According to Proposition \ref{ABP}, one obtains for a universal constant $C=C(d,p,\lambda,\Lambda,\theta_0,\alpha_0,\gamma,diam(\Omega),\|\partial\Omega\|_{C^{1,1}})>0$,
	\begin{equation}\label{2}
		\sup_{\Omega}\,u_j\le \sup_{\partial\Omega}\,u_j +C\left(\left\|G_{v_j}^{\delta}\right\|_{L^p(\Omega)}+\left\|f \right\|_{L^p(\Omega)}\right).
	\end{equation}
	Obviously, the following inequality holds true,
	\begin{equation}\label{3}
		\left\|G_{v_j}^{\delta}\right\|_{L^p(\Omega)}\le |\Omega|^{1/p}g(|\Omega|).
	\end{equation}
	By combining \eqref{1}, \eqref{2} and \eqref{3}, we can obtain
	\begin{equation*}
		\left\|u_j\right\|_{W^{2,p}(\Omega)}\le C(\left\|\psi\right\|_{W^{2,p}(\Omega)}+1+\left\|f\right\|_{L^p(\Omega)}).
	\end{equation*}
	Thus, the estimates for $W^{2,p}$-norm of $\{u_j\}_{j\in\mathbb{N}}$ are independent of $\delta$ and $j$.
	
	As a consequence, by using the Rellich-Kondrachov theorem, there exists $u_{\delta}\in C^{0,1}(\Omega)$ such that
	\begin{equation}\label{udelta}
		u_j\to u_{\delta}\quad\text{in the $C^{0,1}$-norm},
	\end{equation}
	through a subsequence if necessary. We know that this embedding is compact and could get the compactness of $T$ easily.
	
	Next, it is significant to prove the continuity of $T$. Suppose that for every problem \eqref{veqn} with $v_j$ in the right-hand side, there exists unique $W^{2,p}$-viscosity solution $u_j\in C(\Omega)$, i.e. $T(v_j)=u_j$. Set
	\begin{equation*}
		\bigtriangleup_j:=\left\|v_j-v\right\|_{L^\infty(\Omega)}
	\end{equation*}
	Since $v_j\to v$ in the $C^{0,1}$-norm, we have
	\begin{equation*}
		\bigtriangleup_j\to 0\quad\text{as }j\to\infty.
	\end{equation*} Fix $x\in\Omega$ and $h\in[0,\delta]$, only consider changes in $j$
	\begin{equation*}
		|v_j\ge v_j(x)-h|\le |v+\bigtriangleup_j\ge v(x)-\bigtriangleup_j-h|,
	\end{equation*}
	then get the right end decreasing to $|v\ge v(x)-h|$ when $j\to\infty$. Similarly,
	\begin{equation*}
		|v_j\ge v_j(x)-h|\ge |v-\bigtriangleup_j\ge v(x)+\bigtriangleup_j-h|,
	\end{equation*}
	however, one only obtains the right end increasing to $|v> v(x)-h|$ when $j\to\infty$. Next, we need to show that $\{v=v(x)-h\}$ is a null set.
	
	For a.e. $\alpha\in V$, we consider $y\in v^{-1}(\alpha)$. According to the corollary of the Rademacher theorem in \cite{evans}, if $v\in C^{0,1}(\Omega)$, the level set $v^{-1}(\alpha)$ is defined away from its critical set $\{\nabla v(y)=0\}$. Therefore,
	\begin{equation*}
		|v^{-1}(v(x)-h)|=|v^{-1}(v(x)-h)\cap \{\nabla v(y)=0\}|.
	\end{equation*}
	Additionally, we get
	\begin{equation*}
		\mathcal{H}^{d-1}(v^{-1}(v(x)-h)\cap \{\nabla v(y)=0\}) =0
	\end{equation*}
	by applying a corollary of the coarea formula, where $\mathcal{H}^{d-1}$ stands for the $(d-1)$-dim Hausdorff measure.
	
	Then, we get
	\begin{equation*}
		|v=v(x)-h|=|v^{-1}(v(x)-h)|=0\quad \text{a.e. } h\in[0,\delta],
	\end{equation*}
	and further obtain $|v>v(x)-h|=|v\ge v(x)-h|$ for a.e. $h\in [0,\delta].$ For every $x\in\Omega$,
	\begin{equation*}
		|v_j\ge v_j(x)-h|\to |v\ge v(x)-h| \quad \text{a.e. } h\in[0,\delta].
	\end{equation*}
	Therefore, as $j\to\infty$
	\begin{equation*}
		g\left(\frac{1}{\delta}\int_{0}^{\delta} |\{y\in \Omega:v_j(y)\ge v_j(x)-h\}|dh\right)\to g\left(\frac{1}{\delta}\int_{0}^{\delta} |\{y\in \Omega:v(y)\ge v(x)-h\}|dh\right)
	\end{equation*}
	through applying the dominated convergence theorem and the continuity of $g$. Thus,
	\begin{equation}\label{3.13}
		G_{v_j}^{\delta}(x)\to G_v^{\delta}(x) \quad\text{in the $L^p$-norm as }j\to\infty
	\end{equation}
	by using the dominated convergence theorem again.
	\eqref{udelta} and \eqref{3.13} satisty the conditions in Proposition \ref{stability}, thus we get $T(v)=u_{\delta}$ by applying the stability theorem of viscosity solutions. Then
	\begin{equation*}
		T(v_j)=u_j\to u_{\delta}=T(v) \quad\text{ in the $C^{0,1}$-norm }\text{as }v_j\to v.
	\end{equation*}
	So we obtain that $T$ is continuous.

	Finally, one is just going to prove the boundness of $v\in C^{0,1}(\Omega)$ satisfying $v=\gamma\, Tv$ with $\gamma\in [0,1]$.
	\step Suppose that $S:=\{v\in C^{0,1}(\Omega):\exists\,\gamma\in[0,1]\text{ s.t. } v=\gamma\, T(v)\}$ and intend to prove that $S$ is bounded. We argue by contradiction and suppose the statement is false. In this case, there exist a sequence of nonzero functions $\{v_n\}_{n\in\mathbb{N}}\subset S$ and a sequence of nonzero numbers $\{\gamma_n\}_{n\in\mathbb{N}}$ such that $v_n=\gamma_n\, T(v_n)$ and
	\begin{equation}\label{vn}
		\left\|v_n\right\|_{C^{0,1}(\Omega)}\to \infty\quad\text{as }n\to\infty.
	\end{equation}  It is easy to know that $0\in S$ when $\gamma=0$ while for every $0\ne v\in S$, there exists $\gamma\ne 0$ which matches $v$. Note that $w_n:=v_n/\gamma_n\in C^{0,1}(\Omega)$ for every $n\in \mathbb{N}$, so $w_n\in W^{2,p}(\Omega)$ and furthermore,
	\begin{equation*}
		\left\|v_n\right\|_{C^{0,1}(\Omega)}\le \left\|w_n\right\|_{C^{0,1}(\Omega)}\le C\left\|w_n\right\|_{W^{2,p}(\Omega)}\le \tilde{C},
	\end{equation*}
	where $\tilde{C}>0$ is a universe constant and which is a contradiction with \eqref{vn}. So $S$ is a bounded set.
	
	We have proven that all three hypotheses of the fixed point theorem hold true, so there exists fixed point $u_\delta\in C^{0,1}(\Omega)$ for mapping $T$, i.e. $u_\delta=T(u_\delta)$. The existence of solutions to the Dirichlet problem
	\eqref{ueqn} is obtained and the proof of Lemma \ref{delta}	is completed from this.
\end{proof}
By using approximation methods for the solution $u_{\delta}$ which is from Lemma \ref{delta}, we prove the existence and $W^{2,p}$ regularity for viscosity solutions to Dirichlet problem \eqref{eqn} for the fully nonlinear elliptic equation of Grad-Mercier type.
\begin{proof}[Proof of Theorem \ref{main}]
	For every $\delta>0$, there exists a solution $u_\delta\in W^{2,p}(\Omega)$ with uniformly bounded $W^{2,p}$-norm. Find a sequence of functions $\{u_\delta\}$, and a function $u\in C^{0,1}(\Omega)$ such that
	\begin{equation}\label{c0,1}
		u_\delta\to u\quad\text{in the $C^{0,1}$-norm as $\delta\to 0$}.
	\end{equation} Next we are going to apply Proposition \ref{stability} again, only need to prove the convergence of right-hand side.
	
	Let
	\begin{equation*}
		\begin{aligned}
			G_u(x)&:=g(|u\ge u(x)|),\\
			G_{u_\delta}^\delta(x)&:=g\left(\frac{1}{\delta}\int_0^\delta|u_\delta\ge u_\delta(x)-h|dh\right).
		\end{aligned}
	\end{equation*}
	By using the triangle inequality, we have
	\begin{equation*}
		\left\|G_u-G_{u_\delta}^\delta\right\|_{L^p(\Omega)}\le \left\|G_u-G_{u}^\delta\right\|_{L^p(\Omega)}+\left\|G_u^\delta-G_{u_\delta}^\delta\right\|_{L^p(\Omega)}.
	\end{equation*}
	According to \eqref{3.13}, one obtains that
	\begin{equation}\label{conv1}
		\left\|G_u^\delta-G_{u_\delta}^\delta\right\|_{L^p(\Omega)}\to 0\quad\text{as }\delta\to 0.
	\end{equation} Meanwhile, for $h\in[0,\delta]$,
	\begin{equation*}
		\begin{aligned}
			|u\ge u(x)|&\le|u\ge u(x)-h|\\
			&=|u\ge u(x)|+|u(x)>u\ge u(x)-h|\\
			&\le|u\ge u(x)|+|u(x)>u\ge u(x)-\delta|.
		\end{aligned}
	\end{equation*}
	From this,
	\begin{equation*}
		\frac{1}{\delta}\int_0^\delta|u\ge u(x)-h|dh\to |u\ge u(x)| \quad\text{as }\delta\to 0,
	\end{equation*}
	and since $g:[0,|\Omega|]\to \mathbb{R}$ is a continuous function,
	\begin{equation*}
		g\left(\frac{1}{\delta}\int_0^\delta|u\ge u(x)-h|dh\right)\to g\left(|u\ge u(x)|\right) \quad\text{as }\delta\to 0.
	\end{equation*}
	Subsequently, we use the dominated convergence theorem to get
	\begin{equation}\label{conv2}
		\left\|G_u-G_{u}^\delta\right\|_{L^p(\Omega)}\to 0\quad\text{as }\delta\to 0.
	\end{equation}
	
	In light of \eqref{conv1} and \eqref{conv2}, the convergence for $G_{u_{\delta}}^\delta$ to $G_u$ in the $L^p$-norm is acquired, which is applied in Proposition \ref{stability} to get that $u$ is a $W^{2,p}$-viscosity solution to \eqref{eqn}. In addition, on the basis of \eqref{1}, let $j\to\infty$,
	\begin{equation}\label{udeltaw2,p}
		\left\|u_{\delta}\right\|_{W^{2,p}(\Omega)}\le C(\left\|u_{\delta}\right\|_{L^\infty(\Omega)}+\left\|\psi\right\|_{W^{2,p}(\Omega)}+\left\|G_{u_{\delta}}^\delta\right\|_{L^p(\Omega)}+\left\|f\right\|_{L^p(\Omega)}),
	\end{equation}
	where $C=C(d,p,\lambda,\Lambda,\theta_0,\alpha_0,\gamma,diam(\Omega),\|\partial\Omega\|_{C^{1,1}})>0$. Since \eqref{c0,1} and $G^{\delta}_{u_\delta}\to G_u$ in the $L^p$-norm as $\delta\to 0$,
	\begin{equation*}
		\left\|u\right\|_{W^{2,p}(\Omega)}\le C\left(\|u\|_{L^{\infty}\left(\Omega\right)}+\|\psi\|_{W^{2,p}\left(\Omega\right)}+\|G_u\|_{L^p(\Omega)}+\|f\|_{L^p(\Omega)}\right).
	\end{equation*}
	Finally, we complete the proof of Theorem \ref{main}.
\end{proof}

\section{Further $p\operatorname{-BMO}$ results}\label{section4}
In this section, we obtain a global $p\operatorname{-BMO}$ estimate for the second derivatives of solutions refining the borderline case of Calder\'{o}n-Zygmund estimates by making use of the similar approximation strategy. First of all, under the assumption of asymptotic convexity, we derive the $p\operatorname{-BMO}$ \textit{a priori} estimate for solutions to the fully nonlinear equation without inhomogeneous term $g$. Similar to how Proposition \ref{W2,p} is used in Theorem \ref{main}, this $\operatorname{BMO}$ type estimate serves a key role in the subsequent proof of Theorem \ref{pbmo}.

\begin{proposition}\label{pe}
	Let $\Omega\subset \mathbb{R}^d$, $\partial\Omega\in C^{1,1}$ and $u$ be a $W^{2,p}$-viscosity solution of \eqref{3.1}, where $f\in p\operatorname{-BMO}(\Omega)$ and $D^2\psi\in p\operatorname{-BMO}(\Omega)$. Assume that $F$ satisfies \eqref{F}. Suppose that the recession operator $F^\ast$ is convex and satisfies the smallness condition on the oscillation measure in $ L^p$ with $p>d$. Then $D^2u\in p\operatorname{-BMO}(\Omega)$ and
	\begin{equation*}
		\|D^2u\|_{p\operatorname{-BMO}(\Omega)}\le C(\left \| u \right \|_{L^\infty(\Omega)}+\|D^2\psi\|_{p\operatorname{-BMO}(\Omega)}+\|f\|_{p\operatorname{-BMO}(\Omega)}),
	\end{equation*}
	where $C=C(d,p,\lambda,\Lambda,\theta_0,\alpha_0,\gamma,\omega,diam(\Omega),\|\partial\Omega\|_{C^{1,1}})>0$.
\end{proposition}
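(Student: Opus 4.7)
The plan is to prove Proposition \ref{pe} by combining the global $W^{2,p}$ estimate of Proposition \ref{W2,p} with a Campanato-type characterization of $p\operatorname{-BMO}$, exploiting the convexity of $F^*$ (which via the Evans-Krylov theorem yields interior $C^{2,\bar\alpha}$ a priori regularity for frozen homogeneous solutions and, coupled with $\partial\Omega \in C^{1,1}$, the corresponding boundary $C^{1,1}$ estimates in the half-ball). A preliminary reduction replaces $u$ by $\tilde u := u - \psi$ so that the boundary data vanish; the structure condition \eqref{F} yields a modified operator still satisfying \eqref{F}, and the hypothesis $D^2\psi \in p\operatorname{-BMO}(\Omega)$ is absorbed into an effective BMO source whose seminorm is controlled by $\|f\|_{p\operatorname{-BMO}} + \|D^2\psi\|_{p\operatorname{-BMO}}$ up to lower-order $L^\infty$ quantities bounded via the ABP principle (Proposition \ref{ABP}) and the global $W^{2,p}$ bound of Theorem \ref{main}.

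For each $x_0 \in \overline{\Omega}$ and small $r>0$, I would freeze the source at its ball average and construct the comparison function $v$ solving
\begin{equation*}
F^*(D^2 v, 0, 0, x_0) = (f)_{x_0, r} \text{ in } B_r(x_0) \cap \Omega, \qquad v = u \text{ on } \partial(B_r(x_0) \cap \Omega).
\end{equation*}
Convexity of $F^*$ together with Evans-Krylov guarantees that $D^2 v$ is Hölder continuous up to the flattened boundary, so that after rescaling to the unit ball,
\begin{equation*}
\left(\fint_{B_{r/2}(x_0)\cap\Omega}|D^2 v - M_{x_0,r}|^p\, dx\right)^{1/p} \leq C r^{\bar\alpha}\bigl(\|u\|_{L^\infty(\Omega)} + \text{data}\bigr),
\end{equation*}
where $M_{x_0,r} := (D^2 v)_{x_0, r/2}$ serves as the candidate mean for the Campanato seminorm of $D^2 u$.

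The difference $w := u - v$ has zero boundary data on $\partial(B_r \cap \Omega)$ and solves an equation whose right-hand side is
\begin{equation*}
\bigl[f(x) - (f)_{x_0, r}\bigr] + \bigl[F^*(D^2 v, 0, 0, x_0) - F(D^2 v, Dv, v, x)\bigr],
\end{equation*}
whose $L^p$-average is controlled by $\|f\|_{p\operatorname{-BMO}}$ plus $\theta_0 r^{\alpha_0}(\|D^2 v\|_{L^\infty} + 1)$ via \eqref{beta}, together with lower-order terms $\gamma|Dv| + \omega|v|$ handled by the Sobolev embedding $W^{2,p}\hookrightarrow C^{1,\alpha}$ (valid because $p>d$). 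A rescaled application of Proposition \ref{W2,p} to $w$ on $B_r \cap \Omega$ yields a uniform $L^p$ bound on $D^2 w$, and the triangle inequality $|D^2 u - M_{x_0,r}| \leq |D^2 w| + |D^2 v - M_{x_0,r}|$ produces the Campanato bound
\begin{equation*}
\sup_{B_r \subset \Omega} \inf_{M \in \mathcal{S}(d)} \left(\fint_{B_r}|D^2 u - M|^p\right)^{1/p} \leq C\bigl(\|u\|_{L^\infty(\Omega)} + \|D^2\psi\|_{p\operatorname{-BMO}(\Omega)} + \|f\|_{p\operatorname{-BMO}(\Omega)}\bigr),
\end{equation*}
which, by the Campanato characterization, is equivalent to $D^2 u \in p\operatorname{-BMO}(\Omega)$ with the desired estimate.

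The main obstacle I foresee is the bookkeeping near $\partial\Omega$: one must flatten $\partial\Omega$ via a $C^{1,1}$ diffeomorphism and verify that the transformed recession operator retains its half-ball $C^{1,1}$ a priori estimate (the boundary case of Definition \ref{F^ast}), so that both Evans-Krylov and Proposition \ref{W2,p} transfer correctly with constants independent of the scale. A secondary technical point is the treatment of the $\omega|r-s|$ term in \eqref{F}: although it plays no role in the $W^{2,p}$ theory, its contribution to the BMO constant must be tracked through the $L^\infty$ norm of $u$ via the ABP argument, which explains why $\omega$ enters the universal constant in Proposition \ref{pe} even though the assumptions are phrased only on $F^*$ and the data.
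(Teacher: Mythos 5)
Your overall strategy (reduce to zero boundary data, compare with solutions of the convex recession equation, invoke Evans--Krylov, conclude via a Campanato characterization) points in the right direction, and your preliminary reduction matches the paper's first step. But the core comparison step has a genuine gap. You freeze at a single scale $r$, take $v$ solving $F^*(D^2v,0,0,x_0)=(f)_{x_0,r}$ with $v=u$ on $\partial(B_r(x_0)\cap\Omega)$, and claim $\left(\fint_{B_{r/2}(x_0)\cap\Omega}|D^2v-M_{x_0,r}|^p\,dx\right)^{1/p}\le Cr^{\bar\alpha}\left(\|u\|_{L^\infty(\Omega)}+\text{data}\right)$. This is false as stated: the Evans--Krylov estimate is scale-invariant, so it yields $[D^2v]_{C^{\bar\alpha}(B_{r/2})}\le Cr^{-2-\bar\alpha}\|v\|_{L^\infty(B_r)}$ and hence only $\operatorname*{osc}_{B_{r/2}}D^2v\le Cr^{-2}\|u\|_{L^\infty(\Omega)}$, which blows up as $r\to0$. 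The same degeneracy contaminates your bound on $w=u-v$: the contribution $\theta_0 r^{\alpha_0}(\|D^2v\|_{L^\infty}+1)$ coming from \eqref{beta} behaves like $\theta_0 r^{\alpha_0-2}\|u\|_{L^\infty(\Omega)}$. A one-shot comparison cannot produce a bound uniform in $r$ unless the function being compared has already been normalized to be $O(r^2)$ on $B_r$.

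That normalization is precisely what the paper's proof supplies and what your proposal omits. The paper (i) rescales by $\mu$, setting $w_\mu=\mu w$ and $G_\mu(M,x)=\mu G(\mu^{-1}M,x)$, and renormalizes to $v$ so that $\max(\mu,[\mu\,g]_{p\operatorname{-BMO}})\le\epsilon_0$; this smallness regime is where the non-convexity of $F$ is actually handled, since $G_\mu$ converges to the convex limit $H^{\ast}$ and the approximation lemma \cite[Lemma 4.3]{bmo} compares $v_j$ with a solution $h$ of the limit equation in $L^\infty$ by compactness and stability, not by a $W^{2,p}$ estimate on the difference. It then (ii) runs an induction producing paraboloids $P_n$ with $H^{\ast}(A_n,x)=0$ and $\sup_{B_{\rho^n}^{+}}|v-P_n|\le\rho^{2n}$, so that each rescaled function $v_n=(v-P_n)(\rho^n\cdot)/\rho^{2n}$ has unit size and the estimates are uniform in $n$; the matrix $A_k$ with $\rho^{k+1}<r\le\rho^k$ is the correct competitor in the Campanato seminorm. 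To repair your argument you would need to replace the single-scale comparison by this geometric iteration (or an equivalent dyadic decay-of-mean-oscillation scheme), and your application of Proposition \ref{W2,p} to $u-v$ additionally requires $v$ to be a strong solution up to the boundary portion of $B_r\cap\Omega$, a point your sketch does not address.
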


\begin{proof}
	Firstly, we would get rid of the impact of lower order terms. According to \cite[Theorem 3.6]{1996}, we know that $u$ is twice differential a.e. in $\Omega$ and the pointwise derivatives satisfy the equation \eqref{3.1} almost everywhere. Define $\tilde{f}:=F(D^2u,0,0,x)$, then
	\begin{equation*}
		\begin{aligned}
			|\tilde{f}(x)|&\le |F(D^2u,0,0,x)-F(D^2u,Du,u,x)|+|F(D^2u,Du,u,x)|\\
			&\le \gamma|Du|+\omega|u|+f(x) \quad\text{ a.e. } x\in \Omega,
		\end{aligned}
	\end{equation*}
	where $\gamma$ and $\omega$ are constants from \eqref{F}. We know that $u\in W^{2,p}(\Omega)$ which is derived from Proposition \ref{W2,p} and according to the Poincar\'{e} inequality,
	\begin{equation}\label{u2,p}
		\begin{aligned}
			[Du]_{p\operatorname{-BMO}(\Omega)}&\le C\|D^2u\|_{L^p(\Omega)}\\
			&\le C\left(\|u\|_{L^{\infty}\left(\Omega\right)}+\|\psi\|_{W^{2,p}\left(\Omega\right)}+\|f\|_{L^p(\Omega)}\right).
		\end{aligned}
	\end{equation} Since $f\in p\operatorname{-BMO}(\Omega)$ and \eqref{u2,p}, one obtains $\tilde{f}\in p\operatorname{-BMO}(\Omega)$ and
	\begin{equation}\label{ftilde}
		\begin{aligned}
			\|\tilde{f}\|_{p\operatorname{-BMO}(\Omega)}
			&\le C\left(\|Du\|_{p\operatorname{-BMO}(\Omega)}+\|u\|_{p\operatorname{-BMO}(\Omega)}+\|f\|_{p\operatorname{-BMO}(\Omega)}\right)\\
			&\le C\left(\|u\|_{L^{\infty}\left(\Omega\right)}+\|\psi\|_{W^{2,p}\left(\Omega\right)}+\|f\|_{p\operatorname{-BMO}(\Omega)}\right)
		\end{aligned}
	\end{equation}
	where the universal constant $C=C(d,p,\Omega)$. Then it follows that $u$ satisfies $\tilde{F}(D^2u,x):=F(D^2u,0,0,x)=\tilde{f}$ in $\Omega$ in the viscosity sense by using \cite[Corollary 1.6]{du1}. Thus, by using flatting boundary theory we only study the following equation
	\begin{equation*}
		\left\{
		\begin{array}{ll}
			\tilde{F}(D^2u,x)=\tilde{f} & \text{in } B_1^{+}, \\
			u=\psi &\text{on } \partial B_1^{+}\cap \{x_d=0\}.
		\end{array}
		\right.
	\end{equation*}
	Let $w=u-\psi$ and it is easy to know that $w$ is a $W^{2,p}$-viscosity solution to
	\begin{equation}\label{w}
		\left\{
		\begin{array}{ll}
			G(D^2w,x)=g &\text{in }B_1^{+}, \\
			w=0 &\text{on } \partial B_1^{+}\cap \{x_d=0\}.
		\end{array}
		\right.
	\end{equation}
	where $G(M,x):=\tilde{F}(M+D^2\psi,x)-\tilde{F}(D^2\psi,x)$ and $g(x):=\tilde{f}(x)-\tilde{F}(D^2\psi,x)$. Since \eqref{ftilde} and
	\begin{equation}
		\tilde{F}(D^2\psi,x)-\tilde{F}(0,x)\le C(d,\lambda,\Lambda,p,\Omega)\,\|D^2\psi\|_{p\operatorname{-BMO}(\Omega)}
	\end{equation}we have $g\in p\operatorname{-BMO}(B_1^{+})$ and
	\begin{equation}\label{gp}
		\|g\|_{p\operatorname{-BMO}(B_1^{+})}\le C(\|u\|_{L^{\infty}\left(\Omega\right)}+\|\psi\|_{W^{2,p}\left(B_1^{+}\right)}+\|f\|_{p\operatorname{-BMO}(B_1^{+})}+\|D^2\psi\|_{p\operatorname{-BMO}(\Omega)}).
	\end{equation}
	In order to prove $D^2u\in p\operatorname{-BMO}(\Omega)$, we first claim that $D^2w\in p\operatorname{-BMO}(\Omega)$.
	
	Here is an approximation problem for \eqref{w} and $w_\mu(x):=\mu\, w(x)$ is the $W^{2,p}$ viscosity solution to
	\begin{equation}\label{vx}
		\left\{
		\begin{array}{ll}
			G_{\mu}(D^2w_\mu,x)=\mu\, g & \text{in } B_1^{+}, \\
			w_\mu=0 &\text{on } \partial B_1^{+}\cap \{x_d=0\},
		\end{array}
		\right.
	\end{equation}
	where $G_\mu(M,x)=\mu\, G(\frac{1}{\mu} M,x)$ with $\mu\in (0,1)$.
	We intend to apply \cite[Corollary 4.5]{bmo} to \eqref{vx} which requires that $w_\mu$ is a normalized viscosity solution and  for $0<\epsilon_0\ll 1$ and sufficiently small parameter $\mu$ satisfies
	\begin{equation}\label{ep}
		\max(\mu,[\mu\, g]_{p\operatorname{-BMO}(B_1^{+})})\le \epsilon_0.
	\end{equation}
	Therefore, consider
	\begin{equation}\label{v}
		v:=\frac{\epsilon_0\,w_\mu}{\epsilon_0\|w_\mu\|_{L^\infty(B_1^{+})}+\|\mu\, g\|_{p\operatorname{-BMO}(B_1^{+})}},
	\end{equation}
	and note that $v$ is the viscosity solution to
	\begin{equation}
		\left\{
		\begin{array}{ll}
			H_\mu(D^2v,x):=\frac{1}{K}G_{\mu}(KD^2v,x)=\frac{1}{K}\mu\, g & \text{in } B_1^{+}, \\
			v=0 &\text{on } \partial B_1^{+}\cap \{x_d=0\},
		\end{array}
		\right.
	\end{equation}
	with $K=\epsilon_0^{-1}\left(\epsilon_0\|w_\mu\|_{L^\infty(B_1^{+})}+\|\mu\, g\|_{p\operatorname{-BMO}(B_1^{+})}\right)$.
	
	If we want to get $D^2w\in p\operatorname{-BMO}(\Omega)$, just need to obatain the $p\operatorname{-BMO}$ estimate of $D^2v$. Since
	\begin{equation*}
		|D^2v(x)-(D^2v)_{x_0,\rho}|\le |D^2v(x)-M|+|M-(D^2v)_{x_0,\rho}|\le |D^2v(x)-M|+\fint_{B_{\rho}(x_0)\cap\Omega}|D^2v(x)-M|dx,
	\end{equation*}
	to prove that for $B\subset\Omega$ $$\sup_{B}\left(\fint_{B}|D^2v(x)-(D^2v)_{x_0,\rho}|^{p} d x\right)^{\frac{1}{p}} < \infty,$$ it suffices to show that there exists $M\in \mathcal{S}(d)$ such that
	\begin{equation}\label{sup}
		\sup_{B_\rho\subset\Omega}\left(\fint_{B_\rho(x_0)\cap\Omega}|D^2v(x)-M|^{p} d x\right)^{\frac{1}{p}} < \infty.
	\end{equation}
	Now, we claim that for $n\in \mathbb{N}$ and some $\rho\in (0,1)$, there exists a sequence of quadratic paraboloids $P_n(x)=\frac{1}{2}x^TA_nx+b_n\cdot x+c_n$ satisfying
	\begin{equation}\label{a1}
		H^{\ast}(D^2P_n,x)=H^{\ast}(A_n,x)=0,
	\end{equation}
	\begin{equation}\label{a2}
		\rho^{2(n-1)}|A_n-A_{n-1}|+\rho^{n-1}|b_n-b_{n-1}|+|c_n-c_{n-1}|\le C\rho^{2(n-1)}
	\end{equation}
	and
	\begin{equation}\label{a3}
		\sup_{B_{\rho^n}^{+}}|v-P_n(x)|\le \rho^{2n},
	\end{equation}
	where $H^{\ast}(M,x)=\lim\limits_{\mu \to 0}H_\mu(M,x)=\lim\limits_{\mu \to 0}\mu H(\frac{1}{\mu}M,x)$.
	
	Induction method is used in the proof to verify this assertion. To begin with, let $P_0(x)=P_{-1}(x)=\frac{1}{2}x^TA_0x$ with $A_0\in\mathcal{S}(d)$ satisfying $H^{\ast}(A_0,x)=0$. For $j\in\mathbb{N}$, construct a sequence of functions
	\begin{equation}\label{vj}
		v_j(x):=\frac{(v-P_j)(\rho^jx)}{\rho^{2j}},
	\end{equation}
	and then $v_j$ is the viscosity solution to
	\begin{equation}\label{hmuj}
		\left\{
		\begin{array}{ll}
			H_{\mu,j}(D^2v_j(x),x)=g_j & \text{in } B_\rho^{+}, \\
			v_j=0 &\text{on } \partial B_\rho^{+}\cap \{x_d=0\},
		\end{array}
		\right.
	\end{equation}	
	where $H_{\mu,j}(D^2v_j(x),x)=H_{\mu}(D^2v_j(x)+A_j,\rho^jx)$ and $g_j(x)=\mu\, g(\rho^jx)$. According to Proposition \ref{existence},  the existence of $v_j$ to \eqref{hmuj} is obtained. Moreover, $v_j\in W^{2,p}(B_1^{+})$ and
	\begin{equation}\label{vjw2p}
		\left\|v_j\right\|_{W^{2,p}(B_1^{+})}\le C\left\|v_j\right\|_{L^\infty(B_1^{+})}.
	\end{equation}
	Since \eqref{ep}, we have
	\begin{equation*}
		\begin{aligned}
			[g_j]_{p\operatorname{-BMO}(B_1^{+})}&=\sup_{0<\rho_0\le 1}\left(\fint_{B_{\rho_0}^{+}\cap\Omega}|g_j(x)-(g_j)_{\rho_0}|^{p} d x\right)^{\frac{1}{p}}\\
			&=\sup_{0<\rho_0\le 1}\mu\left(\fint_{B_{\rho_0\rho}^{+}\cap\Omega}|g(x)-(g)_{\rho_0\rho}|^{p} d x\right)^{\frac{1}{p}}\\
			&\le [\mu\, g]_{p\operatorname{-BMO}(B_1^{+})}\le \epsilon_0.
		\end{aligned}
	\end{equation*}
	Let $\mu\to 0$ and suppose that $h$ is a viscosity solution to
	\begin{equation}\label{hj}
		\left\{
		\begin{array}{ll}
			H_{j}^{\ast}(D^2h,x)=0 & \text{in } B_\rho^{+}, \\
			h=0 &\text{on } \partial B_\rho^{+}\cap \{x_d=0\}.
		\end{array}
		\right.
	\end{equation}
	By assuming that $H_{j}^{\ast}(0,x)=H^{\ast}(A_j,x)=0$ for every $j\le n$ and $H_j^{\ast}$ is convex because ${F}^{\ast}$ is convex, in accordance with \cite[Theorem 9.7]{caffarelli1995fully}, we have the existence and uniqueness of $h$ to the Dirichlet problem \eqref{hj} and furthermore, $h\in C^{2,\alpha}(B_\rho^{+})$ with $\alpha\in(0,1)$. On the basis of \cite[Lemma 4.3]{bmo}, we have
	\begin{equation}\label{vjh}
		\|v_j-h\|_{L^\infty(B_\rho^{+})}\le\delta\quad\text{with }\delta>0
	\end{equation}
	Subsequently, we have proved the requirements in \cite[Corollary 4.5]{bmo} and by using this corollary, there exists a quadratic polynomial $\tilde{P}(x)$ and $\rho\in(0,1)$ such that
	\begin{equation}\label{r2}
		\sup_{B_{\rho}^{+}}|v_j-\tilde{P}(x)|\le \rho^{2}.
	\end{equation}
	Substituting \eqref{vj} into \eqref{r2}, we obtain
	\begin{equation}
		\sup_{B_{\rho^{j+1}}^{+}}|(v-P_j)(x)-\tilde{P}(\frac{x}{\rho^j})|\le \rho^{2+2j}.
	\end{equation}
	Denote $$P_{n+1}(x)=P_n(x)+\tilde{P}(\frac{x}{\rho^n})\rho^{2n}.$$ It is easy to check that \eqref{a1}, \eqref{a2} and \eqref{a3} are true for $P_{n+1}(x)$. Therefore, claim above is established.
	
	For $\forall\, 0<r<1$, there exist $k\in \mathbb{N}$ and a constant $C$ such that $0<\rho^{k+1}<r\le \rho^k$ and by applying \eqref{vjw2p} and \eqref{vjh},
	\begin{equation*}
		\begin{aligned}
			\left(\fint_{B_r^{+}\cap\Omega}|D^2v(x)-A_k|^{p} d x\right)^{\frac{1}{p}}&\le \left(\frac{1}{|B_{\rho^{k+1}}^{+}\cap\Omega|}\int_{B_{\rho^k}^{+}\cap\Omega}|D^2v(x)-A_k|^{p} d x\right)^{\frac{1}{p}}\\
			&\le C\left(\fint_{B_1^{+}\cap\Omega}|D^2v_k|^{p} dx\right)^{\frac{1}{p}}\le C.
		\end{aligned}
	\end{equation*}
	Let $M=A_k$ in \eqref{sup} and by covering theorem, there exists a universal constant $$C=C(d,p,\lambda,\Lambda,\theta_0,\alpha_0,\gamma,\omega,diam(\Omega),\|\partial\Omega\|_{C^{1,1}})>0$$ such that
	\begin{equation}\label{d2v}
		\|D^2v\|_{p\operatorname{-BMO}(\Omega)}\le C.
	\end{equation}
	Combining \eqref{gp}, \eqref{v} and \eqref{d2v}, we have
	\begin{equation*}
		\begin{aligned}
			\|D^2u\|_{p\operatorname{-BMO}(\Omega)}&\le \frac{1}{\mu}\|D^2w_\mu\|_{p\operatorname{-BMO}(\Omega)}+\|D^2\psi\|_{p\operatorname{-BMO}(\Omega)}\\
			&\le \frac{1}{\mu\epsilon_0}\|D^2v\|_{p\operatorname{-BMO}(\Omega)}\left(\epsilon_0\|w_\mu\|_{L^\infty(\Omega)}+\|\mu\, g\|_{p\operatorname{-BMO}(\Omega)}\right)+\|D^2\psi\|_{p\operatorname{-BMO}(\Omega)}\\
			&\le C\left(\|w\|_{L^\infty(\Omega)}+\| g\|_{p\operatorname{-BMO}(\Omega)}+\|D^2\psi\|_{p\operatorname{-BMO}(\Omega)}\right)\\
			&\le C\left(\|u\|_{L^\infty(\Omega)}+\| f\|_{p\operatorname{-BMO}(\Omega)}+\| D^2\psi\|_{p\operatorname{-BMO}(\Omega)}\right),
		\end{aligned}
	\end{equation*}
	which finishes the proof of Proposition \ref{pe}.
\end{proof}
On the basis of Proposition \ref{pe}, we obtain the following corollary. Since the proof closely follows that of Lemma \ref{existence}, we omit the details here.
\begin{corollary}\label{cor}
	Assume that the hypotheses of Proposition \ref{pe} hold. Then there exists a unique $W^{2,p}$ viscosity solution $u$ to problem \eqref{3.1} with $D^2\psi\in p\operatorname{-BMO}(\Omega)$ and
	\begin{equation*}
		\|D^2u\|_{p\operatorname{-BMO}(\Omega)}\le C(\left \| u \right \|_{L^\infty(\Omega)}+\|D^2\psi\|_{p\operatorname{-BMO}(\Omega)}+\|f\|_{p\operatorname{-BMO}(\Omega)}),
	\end{equation*}
	where $C=C(d,p,\lambda,\Lambda,\theta_0,\alpha_0,\gamma,\omega,diam(\Omega),\|\partial\Omega\|_{C^{1,1}})>0$.
\end{corollary}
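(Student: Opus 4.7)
The plan is to mirror the proof of Proposition~\ref{existence} step by step, using the $p\operatorname{-BMO}$ a priori estimate of Proposition~\ref{pe} in place of the $W^{2,p}$ estimate of Proposition~\ref{W2,p}. The first step is to regularize the data: define $F_j$ by the mollification \eqref{F_j}, and choose smooth approximations $f_j$ and $\psi_j$ of $f$ and $\psi$ respectively. By \cite[Proposition 1.11]{winter}, the approximating Dirichlet problem admits a $C^2$-viscosity solution $u_j$. The preservation of the structure condition \eqref{F}, of the convexity of $F_j^{*}$, and of the oscillation smallness \eqref{beta} under mollification are exactly the verifications carried out in the proof of Proposition~\ref{existence} via \eqref{2.7}--\eqref{2.8}, and transfer without change.

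The second step requires a $p\operatorname{-BMO}$-stable choice of the approximating $f_j$ and $\psi_j$: one must ensure that $\|f_j\|_{p\operatorname{-BMO}(\Omega)}$ and $\|D^2 \psi_j\|_{p\operatorname{-BMO}(\Omega)}$ stay bounded by the corresponding norms of the originals, rather than merely in $L^p$. This is the one point where the argument departs substantively from Proposition~\ref{existence}. Since convolution with a mollifier preserves $\operatorname{BMO}$ seminorms up to a universal multiplicative constant, a direct mollification of $f$ and of $D^2 \psi$ (with $\psi_j$ recovered by adjusting by an affine boundary correction) achieves this. With this in hand, Proposition~\ref{pe} applied to each $u_j$ delivers the uniform estimate
\begin{equation*}
\|D^2 u_j\|_{p\operatorname{-BMO}(\Omega)} \le C\bigl(\|u_j\|_{L^\infty(\Omega)} + \|D^2 \psi_j\|_{p\operatorname{-BMO}(\Omega)} + \|f_j\|_{p\operatorname{-BMO}(\Omega)}\bigr),
\end{equation*}
while Proposition~\ref{ABP} controls $\|u_j\|_{L^\infty(\Omega)}$ uniformly in $j$.

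The third step is the limit passage. Together with the embedding $p\operatorname{-BMO}\hookrightarrow L^p$ on a bounded domain, the preceding bounds give a uniform $W^{2,p}(\Omega)$ bound on $\{u_j\}$. Weak compactness in $W^{2,p}$, together with Rellich--Kondrachov, extracts a subsequence with $u_j \rightharpoonup u$ weakly in $W^{2,p}(\Omega)$ and $u_j \to u$ uniformly on $\overline{\Omega}$. Exactly as in Proposition~\ref{existence}, the stability result Proposition~\ref{stability} identifies $u$ as a $W^{2,p}$-viscosity solution of \eqref{3.1}. The $p\operatorname{-BMO}$ estimate on $D^2 u$ passes to the limit because, on each ball $B_\rho(x_0)\subset\Omega$, the functional $h \mapsto \bigl(\fint_{B_\rho(x_0)}|h - (h)_{x_0,\rho}|^p \,dx\bigr)^{1/p}$ is weakly lower semi-continuous on $L^p$.

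Uniqueness is obtained verbatim as at the end of the proof of Proposition~\ref{existence}: by \cite[Corollary 3.7]{1996} each $W^{2,p}$-viscosity solution is also a $W^{2,p}$-strong solution, the difference $w$ of two solutions lies in $\bar{S}(\lambda,\Lambda,\gamma,0)$ via \eqref{F} together with the monotonicity of $F$ in $r$ inherited from the ambient hypotheses, and Proposition~\ref{ABP} forces $w \equiv 0$. The principal obstacle in the whole argument is the $p\operatorname{-BMO}$-stable regularization in the second step; every other ingredient can be lifted directly from the proof of Proposition~\ref{existence}, which is why the authors omit the details.
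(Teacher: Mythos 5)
Your proposal is correct and follows exactly the route the paper intends: the paper omits this proof, stating only that it "closely follows that of Lemma \ref{existence}" with Proposition \ref{pe} supplying the a priori bound in place of Proposition \ref{W2,p}, and your write-up is a faithful (and more detailed) execution of that scheme, including the one genuinely new point (a $p\operatorname{-BMO}$-stable regularization of the data) and the lower-semicontinuity argument for passing the seminorm to the limit. The only caveat, inherited from the paper itself, is that the uniqueness step still requires $F$ to be non-increasing in $r$, which is assumed in Proposition \ref{existence} but not restated among the hypotheses of Proposition \ref{pe}.
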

Finally, we provide a brief proof for Theorem \ref{pbmo} as follows.
\begin{proof}[Proof of Theorem \ref{pbmo}]
	Because of Theorem \ref{main}, we already have the existence of $W^{2,p}$ viscosity solution for \eqref{eqn}. According to Corollary \ref{cor}, we could get a sequnce of functions $\{u_\delta\}_{\delta>0}$ satisfying $\{D^2u_{\delta}\}_{\delta>0}\subset p\operatorname{-BMO}(\Omega)$ as well as the equation \eqref{ueqn} with frozen right term. It follows that one proceeds similar to the proof of Theorem \ref{main} and replaces \eqref{udeltaw2,p} with
	
	\begin{equation*}
		\|D^2u_\delta\|_{p\operatorname{-BMO}(\Omega)}\le C(\left \| u_\delta \right \|_{L^\infty(\Omega)}+\|D^2\psi\|_{p\operatorname{-BMO}(\Omega)}+\left\|G_{u_\delta}^\delta\right\|_{p\operatorname{-BMO}(\Omega)}+\|f\|_{p\operatorname{-BMO}(\Omega)})
	\end{equation*} at the end of this proof, where $C=C(d,p,\lambda,\Lambda,\theta_0,\alpha_0,\gamma,\omega,diam(\Omega),\|\partial\Omega\|_{C^{1,1}})>0$. In the following, as $\delta\to 0$, we have
	\begin{equation*}
		\|D^2u\|_{p\operatorname{-BMO}(\Omega)}\le C(\left \| u \right \|_{L^\infty(\Omega)}+\|D^2\psi\|_{p\operatorname{-BMO}(\Omega)}+\|g(|u\ge u(x)|)\|_{p\operatorname{-BMO}(\Omega)}+\|f\|_{p\operatorname{-BMO}(\Omega)}).
	\end{equation*}Finally, we obtain that solutions satisfy $ p\operatorname{-BMO}$ estimates for the Grad-Mercier type equation \eqref{eqn}.
\end{proof}

\section*{Data availability}
 Data will be made available on request.

\section*{Acknowledgments}
 The authors are supported by the National Natural Science Foundation of China (NNSF  Grant No.12071229 and No.12101452), Young Scientific and Technological Talents (Level Three) in Tianjin.

\end{document}